\DeclareMathOperator{\im}{Im}
\DeclareMathOperator{\loc}{loc}
\DeclareMathOperator*{\supp}{supp}
\DeclareMathOperator{\Div}{div}
\DeclareMathOperator{\ric}{Ric}
\DeclareMathOperator{\prob}{Prob}
\DeclareMathOperator{\Exp}{Exp}
\DeclareMathOperator{\Log}{Log}
\DeclareMathOperator{\ksd}{KSD}
\DeclareMathOperator{\hess}{Hess}
\theoremstyle{plain}
\newtheorem*{theorem*}{Theorem}
\newtheorem{theorem}{Theorem}[section]
\newtheorem{corollary}{Corollary}[theorem]
\newtheorem{lemma}[theorem]{Lemma}
\newtheorem{proposition}{Proposition}[section]
\theoremstyle{remark}
\newtheorem{definition}{Definition}[section]
\newtheorem*{remark}{Remark}
\newtheorem{example}{Example}[section]
\title{A Framework for Improving the Characterization Scope of Stein's Method on Riemannian Manifolds
}
\author{Xiaoda Qu$^\dagger$ and Baba C. Vemuri$^{\ddag}$\\$^{\dagger}$Department of Statistics \ \ $^{\ddag}$Department of CISE\\University of Florida}
\begin{document}

\maketitle

\begin{abstract}

Stein's method has been widely used to achieve distributional approximations for probability distributions defined in Euclidean spaces. Recently, techniques to extend Stein's method to manifold-valued random variables with distributions defined on the respective manifolds have been reported. However, several of these methods impose strong regularity conditions on the distributions as well as the manifolds and/or consider very special cases. In this paper, we present a novel framework for Stein's method on Riemannian manifolds using the Friedrichs extension technique applied to self-adjoint unbounded operators. This framework is applicable to a variety of  conventional and unconventional situations, 
including but not limited to, intrinsically defined non-smooth distributions, truncated distributions on Riemannian manifolds, distributions on incomplete Riemannian manifolds, etc. Moreover, the stronger the regularity conditions imposed on the manifolds or target distributions, the stronger will be the characterization ability of our novel Stein pair, which facilitates the application of Stein's method to problem domains hitherto uncharted. We present several  (non-numeric) examples illustrating the applicability of the presented theory.

\end{abstract}

\section{Introduction}
Stein's method is a theoretical tool first introduced by Stein in \cite{stein1972bound} to estimate similarity between probability distributions, which has led to numerous applications of distributional approximation to various fields. Several reviews on this subject exist in literature and we cite just a few here \cite{anastasiou2021stein,barbour2014steins,ley2017stein,ross2011fundamentals}.
Classical works mainly focus on discrete or continuous distributions on $\mathbb{R}^n$. Further, the existing methods for distributional comparisons lack a general and universal characterization of the Stein's operator which is applicable to nonlinear spaces, more specifically, Riemannian manifolds. 

In recent times, manifold-valued data have proliferated the world of statistical data science and hence there is need for generalization of classical statistical methods such as the Stein's method to nonlinear spaces specifically, Riemannian manifolds. Statistics on Riemannian manifolds also called Geometric Statistics is a fast growing field with numerous applications. 

Over the past few decades, the need for statistical inference applied to geometric objects (manifold-valued random variables) has motivated several researchers to develop statistical techniques of inference for manifold-valued data. The key challenge in developing these generalizations to suit non-Euclidean geometries is the lack of vector-space structure. Most research efforts over the decades has been focused on theoretical and computational methods for defining and estimation of the first and second order moments of distributions, hypothesis testing, regression analysis etc. from samples of the manifold-valued random variables. There is a large body of literature in this context and we cite a few of the works here \cite{afsari2011,bhattacharya2008statistics,Chakraborty-VemuriAOS19,chikuse2003statistics,Fletcher2007,Kendall10,pennec2006intrinsic,Salem-Vemuri,su2014statistical}. In comparison to this large body of literature, generalization of Stein's method for distributional comparison on Riemannian manifolds has been relatively scarce.

To this end, the key and significant challenges lie in the generalization of Stein's method to manifolds with weak regularity conditions and distributions that may not be smoothly defined everywhere on the manifold. 
In this work, we will elaborate on these challenges and present a novel theory that generalizes the Stein's method to tackle the problem of comparing  unconventional as well as conventional distributions on general Riemannian manifolds that are not necessarily complete or compact. To the best of our knowledge, this is the first such generalization of Stein's method which subsumes relevant results in recent works \cite{barp2018riemann,le2020diffusion,thompson2020,xu2021interpretable} on this topic.

\subsection{Context}

In this section, we begin with the precise statement of the Stein's method and then describe the challenges faced in the generalization from the vector space case to the Riemannian manifold case.

Suppose $M$ is a Riemannian manifold and $\mathcal{P}(M)$ is the space of all Borel probability measures on $M$. If $X$ is a $M$-valued random variable (r.v.), let $Q_X$ denote the pushforward distribution of $X$ on $M$, defined by $Q_X(A):=\prob\{X\in A\}$ for any Borel subset $A$.

\begin{definition}[Stein pair] Given $P\in\mathcal{P}(M)$, a \emph{Stein pair} is a pair $(\mathcal{A}_P,\mathcal{H}_P)$ that consists of a \emph{Stein class} $\mathcal{H}_P$ of real-valued functions on $M$ and a \emph{Stein operator} $\mathcal{A}_P$ that maps the elements in $\mathcal{H}_P$ to real-valued functions on $M$, such that for any $M$-valued r.v. $X$,
$$ Q_X= P\ \Longrightarrow\ E[(\mathcal{A}_P f)(X)]=0,\ \forall f\in \mathcal{H}_P. $$
 We say the Stein pair $(\mathcal{A}_P,\mathcal{H}_P)$ \emph{characterizes} $P$ if the reverse implication $\Longleftarrow$ holds and \emph{characterizes the weak convergence (to $P$)} if $E[(\mathcal{A}_P f)(X_n)]\to 0$ for all $f\in\mathcal{H}_P$ will imply that $Q_{X_n}$ weakly converges to $P$ for a sequence of r.v. $X_n$. 
\end{definition}
\begin{remark} For the rest of the paper, with slight abuse of terminology, we will use the term weak convergence to mean weak convergence to $P$. The usage will be context dependent and will not be confusing.
\end{remark}

The main idea of Stein's method is, given a distribution $P\in \mathcal{P}(M)$ and a Stein pair $(\mathcal{A}_P,\mathcal{H}_P)$ that characterizes $P$ or the weak convergence, it is reasonable to expect $Q_X\approx P$ if $E[(\mathcal{A}_P f)(X)]\approx 0$ for all $f\in\mathcal{H}_P$. Surprisingly, such values $E[(\mathcal{A}_P f)(X)]$ are usually easier to tackle and bound compared to classical probability metrics, and thus can serve as a replacement for classical probability metrics to measure the similarity between $P$ and $Q_X$.

The difficulty is usually not in finding a Stein pair, but rather to enlarge $\mathcal{H}_P$ so that it is rich enough to characterize $P$ or the weak convergence, which is usually established by solving the Stein's equation: For all $P$-integrable function $h$, find an $f_h\in\mathcal{H}_P$ such that $ h-E h(X)= \mathcal{A}_P f_h $. For the distributions on $\mathbb{R}$, such an equation is merely an ordinary differential equation (ODE) for continuous distributions or linear equation for discrete distributions, which can be solved easily. When it comes to the multivariate distributions, the equation becomes a partial differential equation (PDE) and hence difficult to solve in general, which relies on specific properties of the operator, as well as necessary regularity conditions on $M$ and $P$. Many existing works compromise on the characterization and resort to a Stein pair that can only distinguish between $P$ and a $Q$ with $C^1$ Radon-Nikodym derivative $\frac{d Q}{d P}$ w.r.t $P$. On Riemannian manifolds, the situation becomes even more difficult due to the lack of linear structure and a global coordinate chart. 

The generality of distributions that can be distinguished from $P$ using the Stein pair is often described in terms of the conditions imposed on $Q$. In this work, this generality will henceforth be called the \emph{characterization scope} \label{scope}.

\subsection{Prior Works on Stein's Method for Riemannian Manifolds}

Recently, several researchers combined Stein's method with different mathematical tools and developed extensions of the method to more general spaces and presented some relevant applications. For instance,  Barp et al., \cite{barp2018riemann}
introduced a numerical technique for approximating the posterior expectations using a Stein reproducing kernel. Their main contribution was a novel combination of Stein's method with the theory of reproducing kernels Hilbert space and the Sobolev space on Riemannian manifolds.
Later we will show that our framework provides a different path to establish the same result (see example \ref{KSD}).
 Le \& Lewis \cite{le2020diffusion} and Thompson \cite{thompson2020} generalized the classical diffusion approach (first introduced in \cite{barbour1990stein}) to complete Riemannian manifolds assuming that the Bakry-Émery-Ricci curvature is bounded below by some positive constant. They obtained bounds on the Wasserstein metric between distributions with finite first order moment by constructing a Feller diffusion process for which the target measure $P$ is an invariant measure. 
 Hodgkinson et al. in \cite{hodgkinson2020reproducing} constructed a Stein operator on general Polish spaces using an ergodic Markov process. They combine it with the theory of reproducing kernel Hilbert spaces to present a universal theoretical framework for Stein importance sampling, and provide several sufficient conditions for the reproducing Stein kernel to yield a convergence determining kernel Stein discrepancy (KSD). 
In \cite{xu2021interpretable}
Xu and Matsuda developed goodness-of-fit and interpretable model-criticism methods using first and second order Stein's operators and reproducing kernel Hilbert space on general Riemannian manifolds. However, their construction of Stein's operator uses local coordinates and thus depends on the choice of the local chart. Hence, their method can only be applied to distributions supported inside such a local chart. 

It should be noted that one of the key advantages of Stein's method in general is that it does not require the normalization factor of the densities for comparison. This advantage carries over to the case of manifold-valued random variables. This key feature plays an important role from a computational perspective, since most densities on Riemannian manifolds have normalizing factors that are not in closed form and/or are hard to compute. This is one of the major advantages of the Stein's method of distributional comparison over other traditional methods.

\subsection{Our work} 

In this work, we assume $M$ is a connected Riemannian manifold, and $P$ is the target measure supported on entirety of $M$. We assume the density of $P$ w.r.t the volume measure $v$ is known up to a normalizing constant, denoted by $e^{-\phi}$. As mentioned earlier, solving Stein's equation requires the Stein's operator to possess certain good properties, so we settle on  
$$\mathcal{L}_P f=\Delta f-g(\nabla \phi,\nabla f),$$
also known as the weighted Laplacian under the weight $P$, which possesses the properties we seek namely, symmetry and negativity (negative definiteness) of the operator. The operator $\mathcal{L}_P$ was also adopted in \cite{barp2018riemann,le2020diffusion,thompson2020}, but in our work here, we take a completely different approach based on extending the domain of definition of $\mathcal{L}_P$ to a larger set, using Friedrichs extension \cite[Theorem X.23]{reed1975ii} of the self-adjointness property. This facilitates in achieving a broader characterization scope of $\mathcal{L}_P$ and the Stein class $\mathcal{H}_P$ so that $(\mathcal{L}_P,\mathcal{H}_P)$ can characterize $P$ or the weak convergence, defined earlier.
The salient features of our approach can be summarized  as follows:
\begin{itemize}
\item  {\bf New insights into Stein's method:} The key feature of Stein's method proposed here involves providing a framework for analyzing the characterization scope of the Stein pair that facilitates distributions on Riemannian manifolds to be compared. The stronger the imposed regularity conditions are, which are described in terms of the the type of manifolds and the conditions imposed on the target distributions, the stronger the characterization ability of the Stein pair. To the best of our knowledge, this framework subsumes all existing results on Stein pair characterization on Riemannian manifolds.

\item {\bf Mild regularity conditions on manifolds:}  Unlike earlier works discussed above \cite{barp2018riemann,le2020diffusion,thompson2020}, our method is applicable to incomplete manifolds, so it permits $P$ to be supported on some open connected subset of $M$. Further, our method is applicable to manifolds with boundary. This in turn allows us to specify distributions $P$ that are truncated and thus are supported on some bounded region of the manifold (see example \ref{Truncation}). For instance, truncated distributions are often encountered in the computation of statistics, specifically when Fréchet mean (FM) of samples is required to be unique (see  \cite{afsari2011,Kendall10,pennec2006intrinsic,groisser2004newton} for uniqueness conditions). All these properties collectively broaden the scope and applicability of our generalization of the Stein's method. 

\item {\bf Mild regularity on the target measure $P$:} In our work, the regularity assumptions on target measure $P$ imposed in the recent works presented in \cite{barp2018riemann,le2020diffusion,thompson2020} is further weakened, making our approach much more general and applicable. For instance, distributions with locally Lipschitz density fall into this category. A typical example of such distributions includes intrinsically defined distributions (see example \ref{Intrinsic}). Intrinsic distributions inherit several properties of  Euclidean distributions, e.g., entropy maximization \cite{pennec2006intrinsic}, law of large numbers \cite{bhattacharya2003large}, consistency between MLE and FM \cite{cheng2013novel,pennec2006intrinsic} and so on. The explicit form of such distributions often rely on geodesic distance functions $d(\cdot,e)$, the inverse exponential map $\log_e(\cdot)$, the volume density function $\theta(p)$ \cite[p.154]{besse1978manifolds}, and thus are not differentiable or globally continuous.
\end{itemize}

The rest of the paper is organized as follows. In \S\ref{Background} we present the background mathematics needed to follow the rest the of paper, specifically, concepts from  differential geometry, measure theory, weighted Sobolev spaces and the theory of unbounded operators. In \S \ref{MainResults}, we present the main theoretical results of the paper, namely the general framework and the key theorems for Stein's method of distributional approximation on Riemannian manifolds. Several examples are then presented in \S\ref{applications} illustrating the use of the framework in characterization of distributional approximation on compact, non-compact and incomplete manifolds.  We draw conclusions in \S\ref{Conc}. Proofs of all the theorems are included in the \hyperref[Appendix]{Appendix}.

\section{Desiderata: Riemannian Geometry, Measure Theory, Functional Analysis and Unbounded Operators}\label{Background}

In this section, we set up the notations and standing assumptions, and then present a brief introduction to mathematical definitions and concepts that will be used in the rest of the paper. The definitions, assumptions and theorems/propositions pertain to the fields of Geometric Analysis and Unbounded Operator Theory involving topics from Differential Geometry, Measure Theory and Functional Analysis. For a more comprehensive study of the background material presented here, we refer the readers to \cite{lee2006riemannian,lee2013smooth,petersen2016riemannian} on differential geometry, \cite{billingsley2008probability,billingsley2013convergence} for measure theory on metric spaces, \cite{adams2003sobolev,aubin2012nonlinear,hebey2000nonlinear,taylor2000PDEI} for weighted Sobolev spaces on manifolds and \cite{reed1972methods,reed1975ii} for unbounded operator theory.

\subsection{Riemannian Geometry}
For a detailed account on various definitions of geometric quantities provided below, we refer the reader to a standard text book on Differential Geometry for instance, \cite{lee2006riemannian,lee2013smooth,petersen2016riemannian}.

Let $\mathbb{R}_+^n:=\{(x^1,\cdots,x^n)\in \mathbb{R}^n: x^n\geq 0\}$. 
A topological space $X$ is \emph{locally $\mathbb{R}_+^n$}, if for each $x\in X$, there exists a homeomorphism $\xi$ between an open neighborhood $U$ of $x$ and an open set $\xi(U)$ in $\mathbb{R}_+^n$. Such a pair $(U,\xi)$ is a \emph{local chart} or \emph{chart}. 

\begin{definition}[Manifold] A $n$-manifold $M$ is a second countable Hausdorff locally $\mathbb{R}_+^n$ space. The boundary of $M$, denoted $\partial M$, is the subset locally mapped to $\partial\mathbb{R}_+^n:=\{(x^1,\cdots,x^n)\in\mathbb{R}_+^n: x^n= 0\}$ by the chart. A manifold $M$ is said to be \emph{with boundary} if $\partial M\neq \emptyset$, \emph{without boundary} if $\partial M=\emptyset$.
\end{definition}

The smoothness of manifolds can be explained via the concept of an atlas (a collection of charts). A \emph{smooth atlas} is a family of charts $\{(U_\alpha,\xi_\alpha)\}_\alpha$ such that $M=\bigcup U_\alpha$ and each $\xi_\alpha\circ \xi_\beta^{-1}$ is smooth in $\mathbb{R}_+^n$. 

\begin{definition}[Smooth Manifolds] A manifold is \emph{smooth} if it admits a smooth atlas.
\end{definition}

Smoothness facilitates the definition of  differentiable maps, tangent vectors and tensors on manifolds. A map from a smooth manifold to another smooth manifold is \emph{differentiable}, \emph{continuous differentiable} or $C^k$ if its composition with charts are respectively differentiable, continuous differentiable and $C^k$. Note that this definition is independent of the choice of charts and coincides with the classical Euclidean notions, since the compositions of chart maps are smooth. Let $C^k(M),C^k_b(M), C^k_c(M)$ be the spaces of $C^k$, bounded $C^k$ and compactly supported $C^k$ functions on $M$. The notation $C^L(M)$ and $C^L_{\loc}(M)$ represents the space of Lipschitz continuous and locally Lipschitz continuous functions, with respect to the Riemannian distance $d$ about to be defined next. When $\partial M\neq \emptyset$, $C^k_c(M)$ refers to compactly supported $C^k$-functions with $f|_{\partial M}=0$.

\subsubsection*{Tangent and cotangent tensors} A \emph{local curve} at a point $x$ of the manifold $M$ is a smooth map $\mathfrak{c}:(-\epsilon,\epsilon)\to M $ with $\mathfrak{c}(0)=x$. A \emph{tangent vector} at $x$ is an equivalence class of local curves at $x$ with the same gradient $\frac{d}{d t}(\xi\circ \mathfrak{c})(0)$ for some chart $(U,\xi)$. The \emph{tangent space} $T_x M$ is the $n$-dimensional vector space of tangent vectors at $x \in M$. The \emph{cotangent space} $T_x^* M$ is then the dual space of $T_x M$. For a tangent vector $Y\in T_x M$ and a smooth functions $f$, we have $Y (f)=\frac{d }{d t} f\circ\mathfrak{c}(0)$ where $\mathfrak{c}$ is a local curve corresponding to $Y$.

\subsubsection*{Tensors} The \emph{tensor product space} of $(k,l)$-type tensors on $T_x M$ is denoted by $T^{k,l}_x M$, whose  elements are \emph{tangent tensors} at $x$. A \emph{vector field} or a \emph{tensor field} is an assignment that assigns to each $x\in M$ a vector in $T_x M$ or a tensor in $T^{k,l}_x M$. The vector or tensor field is \emph{smooth} or \emph{measurable} if the assignment is smooth or measurable.

\subsubsection*{Differential of maps} Given two smooth manifolds $M$ and $N$, the \emph{differential} of a smooth map $f:M\to N$ at each $x\in M$ is the linear map $d f_x: T_x M\to T_{f(x)} N$ that maps the equivalence class of local curve $\mathfrak{c}$ at $x$ to the equivalence class of local curve $f\circ \mathfrak{c}$ at $f(x)$. Specifically, when $N$ is the real line $\mathbb{R}$, $d f$ is a linear function on $T_x M$. When $M$ is some real interval $(a,b)$, the map is then a smooth curve, usually denoted by  $\gamma:(a,b)\to M$, and the tangent vector $\gamma'(t)$ at $t$ is the equivalence class of $\gamma$ as a local curve itself.

\subsubsection*{Riemannian metric} A \emph{Riemannian metric} $g$ is a smooth $(0,2)$-tensor field with $g_x\in T_x^{0,2}M$ symmetric and positive definite for each $x\in M$, that is, $g$ makes every tangent space, $T_x M$, an inner product space. The inner product on a vector space $V$ naturally induces an inner product structure on the tensor product space of $(k,l)$-type tensors on $V$ \cite[\S II.4]{reed1972methods}.  Therefore, $g$ induces an inner product on each $T^{k,l}_x M$, also denoted here by $g$. The Riemannian metric gives meaning to the pointwise length of vector or tensor fields.

\begin{definition}[Riemannian manifold] A \emph{Riemannian manifold} $M$ is a smooth manifold endowed with a Riemannian metric $g$.
\end{definition}

\subsubsection*{Riemannian distance} For a connected Riemannian manifold $M$, any two points $x,y$ can be connected by a piecewise smooth curve $\gamma:[0,1]\to M$ with $\gamma(0)=x,\gamma(1)=y$. The length of the curve is then defined as $L(\gamma)=\int_0^1 |\gamma'(t)| d t $. The Riemannian distance $d$ is then given by $$d(x,y)=\inf \left\{L(\gamma):\text{for all } \gamma \text{ connecting } x,y\right\}.$$ A Riemannian manifold $M$ is \emph{complete} when $M$ is a complete metric space endowed with the distance $d$. The Hopf-Rinow theorem \cite[Theorem 5.7.1]{petersen2016riemannian} states that a Riemannian manifold is complete if and only if each bounded and closed subset of $M$ is compact.

\subsubsection*{Connection and Covariant derivative} 

A connection is a map $(Y,Z)\mapsto \nabla_Z Y$ that is the derivative of a vector field $Y$ in the direction of another vector field $Z$ on a Riemannian manifold $M$. The fundamental theorem of Riemannian geometry states that there is an unique connection on any Riemannian manifold, called the \emph{Levi-Civita connection}, that is torsion-free and compatible with the Riemannian metric. Imposition of the metric compatibility constraint along with tensor product and tensor contraction rules allows one to extend such derivatives $\nabla_Z Y$ to higher order cases where $Y$ is a $(k,l)$-type tensor field. In such cases, $\nabla_Z Y$ is called the \emph{covariant derivative} of $Y$ in the direction of $Z$.  Moreover, $\nabla Y$ can be regarded as a $(k,l+1)$-type tensor field since one can substitute in any vector $Z$, which is called the \emph{total covariant derivative} of $Y$. For further details, we refer the readers to \cite[\S 5]{lee2006riemannian}. However, in this work, the readers are only required to know that the covariant derivative is the generalization of higher order Euclidean derivatives to Riemannian manifolds.

\begin{remark} When the symbol $\nabla$ stands for covariant derivative, $\nabla f = d f$ for a differentiable function $f$. However, $\nabla$ stands for the gradient operator in some other works and $\nabla f$ is the vector such that $d f(\cdot) =g(\cdot,\nabla f)$. They differ up to a musical isomorphism \cite[p.342]{lee2013smooth}. In this work, since we've already introduced that the Riemannian metric $g$ will induce the same structure on the cotangent space $T^*_x M$, such abuse in notation will not lead to any misinterpretation. Thus, we will not distinguish between them here.
\end{remark}

\subsubsection*{Volume measure} The volume measure $v$ on $M$ can be defined as the $n$-dimensional Hausdorff measure or defined through integration of differential forms. These two definitions coincide.  For details we refer the readers to \cite{evans2018measure,lee2013smooth}. In this work, the reader are only required to know that the volume measure is the generalization of Euclidean Lebesgue measure to Riemannian manifolds, and commonly plays the role of a dominating measure of probability densities functions on the manifolds. The boundary $\partial M$, as a smooth submanifold of $M$, naturally inherits a $n-1$-dimensional volume measure from $v$, denoted by $\partial v$.

\subsubsection*{Differential operator} The well known divergence and Laplacian operators in Euclidean space have a generalization to Riemannian manifolds, also denoted here by the same symbols $\Div$ and $\Delta$ respectively. The Laplacian is called the Laplace-Beltrami operator in the manifold case. Specifically, for functions $f$ and $h$ on $M$, following property will be used frequently in \S\ref{SPE}:
\begin{equation}\label{DIVfml}
    \Div(h\nabla f)= h\cdot \Delta f +g(\nabla f,\nabla h). 
\end{equation}
Moreover, the following theorem \cite[Theorem 16.32]{lee2013smooth} will be used repeatedly in \S\ref{SPE}.
\begin{theorem*}[Divergence theorem] \label{DivThm}  For a compactly supported $C^1$ vector field $Y$,
$$\int_M \Div(Y) d v =\int_{\partial M} g(\Vec{n},Y) d\partial v,$$
where $\Vec{n}$ is the outward-pointing unit vector field along $\partial M$.
\end{theorem*}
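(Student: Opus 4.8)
The plan is to deduce the statement from Stokes' theorem for manifolds with boundary, converting the assertion about a vector field and the divergence operator into one about differential forms and the exterior derivative. The first step is to record the pointwise identity
$$\Div(Y)\,dv = d\!\left(\iota_Y\, dv\right),$$
where $\iota_Y$ denotes interior multiplication of the volume form $dv$ by $Y$. This follows from Cartan's formula $\mathcal{L}_Y = d\,\iota_Y + \iota_Y\, d$ applied to $dv$, using $d(dv)=0$ together with the characterization $\mathcal{L}_Y\, dv = \Div(Y)\,dv$ of the divergence; equivalently, one checks it directly in local coordinates from $\Div(Y) = \tfrac{1}{\sqrt{\det g}}\,\partial_i\!\big(\sqrt{\det g}\;Y^i\big)$. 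Since $Y$ is compactly supported and $C^1$, the $(n-1)$-form $\omega := \iota_Y\, dv$ is a compactly supported $C^1$ form, so Stokes' theorem yields
$$\int_M \Div(Y)\, dv = \int_M d\omega = \int_{\partial M}\iota^{*}\omega,$$
where $\iota:\partial M\hookrightarrow M$ is the inclusion.

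It remains to identify $\iota^{*}(\iota_Y\, dv)$ with $g(\vec n, Y)\, d\partial v$. Along $\partial M$, decompose $Y = g(\vec n, Y)\,\vec n + Y^{\top}$ into its normal and tangential parts. For the tangential part, choose a local orthonormal frame $(\vec n, e_2,\dots,e_n)$ with $e_2,\dots,e_n$ tangent to $\partial M$; then $\big(\iota_{Y^{\top}}\, dv\big)(e_2,\dots,e_n) = dv(Y^{\top}, e_2,\dots,e_n) = 0$ because $Y^{\top}$ lies in $\operatorname{span}(e_2,\dots,e_n)$, so $\iota^{*}(\iota_{Y^{\top}} dv) = 0$. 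For the normal part, $\iota^{*}(\iota_{\vec n}\, dv)$ is by construction the Riemannian volume form $d\partial v$ of the induced metric on $\partial M$ — this is precisely the way $\partial v$ was introduced above — so altogether $\iota^{*}(\iota_Y\, dv) = g(\vec n, Y)\, d\partial v$, which finishes the argument.

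The main obstacle, once Stokes' theorem is granted, is the bookkeeping around orientation and signs: $M$ need not be orientable, so to be rigorous one should phrase everything in terms of densities rather than top-degree forms, or equivalently run a partition-of-unity argument chart by chart, in which each boundary chart reduces $\int_{\mathbb{R}^n_+}\partial_i\!\big(\sqrt{\det g}\,Y^i\big)\,dx$ to a single boundary integral by the fundamental theorem of calculus in the $x^n$-variable, and one verifies that $-\partial/\partial x^n$ is the outward unit normal (after normalization) and that the surviving factor matches $\sqrt{\det\big(g|_{\partial M}\big)}$. If instead a fully self-contained treatment were wanted, the real work would be proving Stokes' theorem itself: the partition-of-unity reduction to vector fields supported in a single chart, plus the two model computations on $\mathbb{R}^n$ (an interior chart, where both sides vanish) and on $\mathbb{R}^n_+$ (a boundary chart, where the FTC produces exactly the boundary term).
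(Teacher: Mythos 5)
Your argument is correct and is essentially the standard proof: the paper does not prove this statement but cites it as \cite[Theorem 16.32]{lee2013smooth}, and the route you take — $\Div(Y)\,dv = d(\iota_Y\,dv)$, Stokes' theorem for compactly supported $C^1$ forms, and the normal/tangential decomposition identifying $\iota^*(\iota_Y\,dv)$ with $g(\vec n,Y)\,d\partial v$ — is precisely the argument in that reference. Your closing caveat about non-orientable $M$ (work with densities or run the partition-of-unity, chart-by-chart computation) is also the right one, since the paper's manifolds are not assumed orientable.
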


\subsubsection*{Geodesic and the Exponential map} A \emph{geodesic} is a curve $\gamma:(a,b)\to M$ such that $\nabla_{\gamma'(t)}\gamma'(t)=0$ for all $t\in (a,b)$. The geodesic is the generalization of Euclidean straight line to Riemannian manifolds. For each $V\in T_x M$ the tangent space at $x\in M$ that is small enough, there exists an unique geodesic $\gamma(t)$ such that $\gamma(0)=x$ and $\gamma'(0)=V$. The \emph{exponential map} $\Exp_x: T_x M\to M$ is then defined as the map $V\mapsto \gamma(1)$. The inverse of $\Exp_x$, called \emph{logarithm map}, is denoted $\Log_x$. For more details, we refer the reader to \cite{lee2013smooth}.

\subsubsection*{Cut locus} \cite[\S VIII.7]{kobayashi1963foundations} The \emph{cut locus} $\mathscr{C}_\mu$ of a point $\mu$ in the Riemannian manifold $M$ is the set where $d(\cdot,\mu)^2$ is not smooth. The exponential map $\Exp_\mu$ is then a diffeomorphism between an open cell of $T_\mu M$ and $\mathscr{N}_\mu:= M\setminus \mathscr{C}_\mu$, so is the logarithm map $\Log_\mu$. It is well-known that the volume measure of $\mathscr{C}_\mu$ is $0$.

\subsection{Measure Theory on Metric spaces}

Let $M$ be a Riemannian manifold. For a subset $E$ of $M$, let $\overline{E}$ denote its closure and $E^\circ$ denote its interior. We write $E_1\Subset E_2$ if $\overline{E}_1\subset E^\circ_2$ and $\overline{E}_1$ is compact. Let $\mathcal{M}(M)$ and $\mathcal{P}(M)\subset\mathcal{M}(M)$ denote the spaces of finite measures and probability measures respectively on the Borel algebra $\mathscr{B}(M)$. In this work, given a measure $Q$, we say a set $F\in\mathscr{B}(M)$ is $Q$-{\em null} if $Q(F)=0$. When an event happens excepting on a $Q$-null set, it is said to be $Q$-almost everywhere, denoted $Q$-a.e.

To reduce clutter and to simplify the notation, let $ Q(f;F)$ represent the integration $\int_F f d Q $ for $Q\in\mathcal{M}(M)$, $F\in\mathscr{B}(M)$ and measurable function $f$, abbreviated as $Q(f)$ if $F=M$.

\subsubsection*{Absolute continuity} For two measures $Q_1,Q_2\in\mathcal{M}(M)$, $Q_1$ is said to be \emph{absolutely continuous} with respect to $Q_2$, and denoted by $Q_1\ll Q_2$, if any $Q_2$-null set is $Q_1$-null. The Radon-Nikodym theorem \cite[theorem 32.2]{billingsley2008probability} states that $Q_1\ll Q_2$ if and only if there exists a non-negative measurable function, denoted by $\frac{d Q_1}{d Q_2}$, such that $Q_1(F)=Q_2\big(\frac{d Q_1}{d Q_2};F\big)$ for all $F\in\mathscr{B}(M)$. We write $Q_1\sim Q_2$ if $Q_1\ll Q_2$ and $Q_2\ll Q_1$. Further, we write $Q_1\asymp Q_2$ if there exists $C>0$ such that $C^{-1} Q_1(F)\leq Q_2(F)\leq C Q_1(F)$ for all $F\in\mathscr{B}(M)$.

\subsubsection*{Weak convergence} For $Q_n,Q\in \mathcal{P}(M)$, say $Q_n$ converges weakly to $Q$, denoted $Q_n\xrightarrow{w.} Q$, if $Q_n(f)\to Q(f)$ for all $f\in C_b(M)$.

\subsubsection*{$L^p$-spaces} The $L^p$ space is denoted by $L^p(M;Q)$ for $p\geq 1$ and measure $Q\in \mathcal{M}(M)$. Moreover, $L^p_{\loc}(M;Q)$ denote the space of locally $L^p$-integrable functions under $Q$, and $L^p_0(M;Q)$ is the center $L^p$ space, that is, the space of all $f\in L^p(M;Q)$ with $Q(f)=0$. The term $L^p(M;Q)$ is usually abbreviated as $L^p(Q)$, since we only discuss measures on Riemannian manifold $M$ in this work. Specifically, $L^p(M)$ represents the $L^p$ space under the volume measure $v$.

\subsubsection*{$L^p$-tensor field} The $L^p$ norm of tensor fields can be similarly defined to that of $L^p$ norms of scalar functions. 
Suppose $X$ is a $(k,l)$-type tensor field (note that a $(1,0)$-type tensor field is a vector field). Let $|X|:=\sqrt{g(X,X)}$ represent the pointwise length of $X$ and $\Vert X \Vert_{L^p(Q)}:=\sqrt[{}^p]{Q(|X|^p)}$ be the integral $L^p$-norm under the measure $Q$. In this work, we don't use special notation for the space of $L^p$ tensor fields.
\vspace{5mm}

It is worth noting that the $L^2$ space is a Hilbert space endowed with inner product structure. In order to incorporate this inner product structure into our notation for measurable functions and tensor fields, we introduce the following notations. Let $Q(X,Y):=Q(g(X,Y))=\int_M g(X,Y) d Q$ for two tensor fields $X$ and $Y$ of the same type (matching indices). Specifically, let $Q(f,h):=Q(f\cdot h)$ for functions $f$ and $h$.

\subsection{Weighted Sobolev Spaces}

In this section, we introduce the concept of weighted Sobolev spaces, which will be needed in \S\ref{SPE}, where we present the Stein pair extension to accommodate larger function classes and an appropriate Stein operator for such classes.
Let
$$\mathcal{C}^{p,k}(M;Q):=\left\{f\in C^\infty(M): Q(|\nabla^j f |^p)<+\infty, 0\leq j\leq k\right\},$$ 
then the weighted Sobolev space $W^{p,k}(M;Q)$ is defined as the completion of $\mathcal{C}^{p,k}(M;Q)$ under the Sobolev norm 
$$\Vert f\Vert_{W^{p,k}(M;Q)}:= \bigg[\sum_{j=0}^k Q(|\nabla^j f|^p)\bigg]^{\frac{1}{p}}.$$

The local Sobolev space $W^{p,k}_{\loc}(M;Q)$ is then
defined as the space of functions $f$ such that for each $x\in M$, there exists an open neighborhood $x\in U\Subset M$ with a smooth boundary such that $f|_{\overline{U}}\in W^{p,k}(\overline{U};Q)$, or equivalently, $f|_{\overline{\Omega}}\in W^{p,k}(\overline{\Omega};Q)$ for any open $\Omega\Subset M$ with a smooth boundary. Let $W^{p,k}_c(M;Q)$ represents the subspace of functions in $W^{p,k}(M;Q)$ with compact support.

When $p=2$, the space $W^{2,k}(M;Q)$ is denoted by $H^k(M;Q)$, which is a Hilbert space equipped with the inner product
$$ \langle f,h\rangle_{H^k(M;Q)}=\sum_{j=0}^k Q(\nabla^j f,\nabla^j h). $$

The abbreviation used here is similar to that in the $L^p$ spaces. We abbreviate $W^{p,k}(M;Q)$ by $W^{k,l}(Q)$ and $H^k(M;Q)$ by $H^k(Q)$ when there is no confusion with regards to the domain, and $W^{p,k}(M)$ , $H^k(M)$ represent the Sobolev spaces with the volume measure $v$.

Following propositions are frequently used in this work.

\begin{proposition}\label{LisSob} Suppose $M$ is a Riemannian manifold and $p\geq 1$.
\begin{enumerate}
    \item $C^L(M)\subset W^{p,1}(M)$ for compact $M$ with or without boundary.
    \item $C^L_{\loc}(M)\subset W^{p,1}_{\loc}(M)$ for arbitrary $M$.
\end{enumerate}
\end{proposition}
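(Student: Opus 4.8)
The plan is to reduce the proposition to the classical Euclidean facts that a compactly supported Lipschitz function belongs to $W^{1,p}$ and is approximated in the Sobolev norm by its mollifications, and then transport this to $M$ through charts. Two ingredients are used throughout. First, \emph{Rademacher's theorem on manifolds}: if $f$ is Lipschitz for the Riemannian distance $d$, then in any chart $(U,\xi)$ it is locally Lipschitz for the Euclidean distance (on compact subsets of $U$ the metric coefficients are bounded above and below by positive constants), hence differentiable Lebesgue-a.e. there by the classical theorem; since $v$ is locally comparable to Lebesgue measure in charts, $f$ is differentiable $v$-a.e., $\nabla f = df$ is a measurable section of $T^{*}M$, and $|\nabla f|\le\mathrm{Lip}(f)$ holds $v$-a.e. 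Second, since $W^{p,1}$ is defined as a completion, to prove $f\in W^{p,1}$ it suffices --- in the spirit of the Meyers--Serrin ($H=W$) theorem --- to produce a sequence in $\mathcal{C}^{p,1}(M;v)$ converging to $f$ in $L^{p}$ whose gradients converge in $L^{p}$ (this is where we use $1\le p<\infty$, the range in which mollification converges).

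For part (1), let $M$ be compact. Then $f\in C^{L}(M)$ is bounded and $v(M)<\infty$, so $f\in L^{p}(M)$; and $\nabla f\in L^{p}$ because it is $v$-a.e. bounded. It remains to build the smooth approximants. Take a finite atlas $\{(U_{i},\xi_{i})\}_{i=1}^{N}$ with a subordinate smooth partition of unity $\{\rho_{i}\}$, so that $f=\sum_{i}\rho_{i}f$ with each $\rho_{i}f$ Lipschitz and with support a compact subset of $U_{i}$. Transfer to Euclidean space: $g_{i}:=(\rho_{i}f)\circ\xi_{i}^{-1}$ is a compactly supported Lipschitz function on $\xi_{i}(U_{i})\subset\mathbb{R}^{n}$ (or $\subset\mathbb{R}^{n}_{+}$ for a boundary chart), hence $g_{i}\in W^{1,p}$. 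For an interior chart the mollifications $\varphi_{\varepsilon}*g_{i}$ are smooth, supported in a fixed compact subset of $\xi_{i}(U_{i})$ for small $\varepsilon$, and converge to $g_{i}$ in $W^{1,p}$. For a boundary chart, first extend $g_{i}$ to a compactly supported Lipschitz function on all of $\mathbb{R}^{n}$ (cut off a McShane-type extension), mollify, and restrict to $\mathbb{R}^{n}_{+}$; the restriction lies in $C^{\infty}$ of the half-space and still converges in $W^{1,p}$. Pulling back, $(\varphi_{\varepsilon}*g_{i})\circ\xi_{i}$ extends by zero to an element of $C^{\infty}(M)$ supported in $U_{i}$ and converges to $\rho_{i}f$ in $W^{p,1}(M)$: on the relevant compact set the metric coefficients and their first derivatives are bounded, so Euclidean $W^{1,p}$-convergence passes to Riemannian $W^{p,1}$-convergence of the pullbacks. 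Summing the finitely many pieces yields a smooth function approximating $f$ in $W^{p,1}(M)$, hence $f\in W^{p,1}(M)$.

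For part (2), let $f\in C^{L}_{\loc}(M)$ and $x\in M$. Choose an open $W\ni x$ on which $f$ is Lipschitz for $d$, then an open $U$ with $x\in U$, $\overline{U}\subset W$, $\overline{U}$ compact, and $\partial U$ smooth --- a small geodesic ball when $x$ is interior, a suitably chosen coordinate neighborhood modeled on a smoothly bounded domain of $\mathbb{R}^{n}_{+}$ when $x\in\partial M$. Then $\overline{U}$ is a compact Riemannian manifold with smooth boundary, and $f|_{\overline{U}}$ is Lipschitz on it: it is Lipschitz for the restriction of $d_{M}$, hence for the intrinsic distance of $\overline{U}$, which dominates $d_{M}$. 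Part (1) then gives $f|_{\overline{U}}\in W^{p,1}(\overline{U};v)$. Since such a $U$ exists around every point, $f\in W^{p,1}_{\loc}(M)$ by definition.

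The routine parts --- the chart form of Rademacher, the boundedness of the metric coefficients on compact sets, the finite summation over the partition of unity, and the choice of neighborhoods in part (2) --- require no new ideas. The one step that needs care is the boundary case of the mollification: matching the completion norm by a genuinely smooth sequence when $\supp(\rho_{i}f)$ meets $\partial M$, which is handled by extending the function across the boundary before mollifying and then restricting. This is the manifold-with-boundary analogue of the classical Meyers--Serrin theorem on Lipschitz domains, and it is the crux of the argument.
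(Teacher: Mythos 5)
Your proof is correct, but it takes a genuinely different route from the paper's. The paper disposes of the compact boundaryless case by citing a proposition of Hebey, handles the boundary case by embedding $M$ into a compact manifold without boundary of the same dimension (the doubling construction) and extending the Lipschitz function across the boundary via McShane's theorem before invoking the boundaryless result, and then obtains part (2) by localizing to a compact neighborhood. You instead give a self-contained argument: Rademacher's theorem transported through charts supplies the $v$-a.e.\ defined, bounded, measurable gradient, and a finite partition of unity together with chart-by-chart mollification produces the smooth approximating sequence that the completion definition of $W^{p,1}(M)$ demands, with boundary charts handled by a McShane extension inside the half-space chart followed by mollification and restriction. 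Both arguments therefore rely on McShane's extension theorem, but at different levels --- globally on the doubled manifold in the paper, locally in a Euclidean half-space in your version. The paper's route buys brevity and delegation to standard references; yours buys transparency about where Rademacher's theorem, the uniform comparability of the metric coefficients on compact chart subsets, and the Meyers--Serrin-type density enter, and it verifies membership in the completion-defined space directly rather than through the (implicitly assumed) compatibility of restriction with the doubling. Your part (2) coincides with the paper's localization step, including the observation that Lipschitz continuity for $d_M$ passes to the intrinsic distance of the compact neighborhood.
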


\begin{proof} The case of compact manifolds without boundary directly follows from \cite[proposition 2.4]{hebey2000nonlinear}. Moreover, a compact manifold with boundary can be embedded in a compact manifold without boundary of the same dimension (see the argument in  \cite[\S 4.4]{taylor2000PDEI}). A variation of Tietze extension theorem \cite{mcshane1934extension} states that a Lipschitz continuous function on closed subset can be extended to a Lipschitz continuous functions globally, so the result also works for compact manifold with boundary. For an arbitrary manifold $M$, we can just take a compact local neighborhood and draw the conclusion. 
\end{proof}

\begin{proposition}[Poincaré Inequality]\label{PI} Suppose $M$ is compact with boundary (possibly empty). There exists $C>0$, depending on $p\in [1,n)$ and $M$, such that
$$ \Vert f - (f)_M \Vert_{L^p(M)}\leq C\Vert f \Vert_{W^{p,1}(M)} $$ 
for $f\in W^{p,1}(M)$, where $(f)_M=[v(M)]^{-1}\int_M f d v$ is the average of $f$ over $M$. 
\end{proposition}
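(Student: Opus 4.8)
The plan is to derive the displayed bound from the stronger Poincar\'e--Wirtinger inequality $\|f-(f)_M\|_{L^p(M)}\le C\|\nabla f\|_{L^p(M)}$ (stronger since $\|\nabla f\|_{L^p(M)}\le\|f\|_{W^{p,1}(M)}$), which I would prove by the classical compactness--contradiction argument. It is worth recording, though, that the inequality exactly as stated is in fact elementary: by the triangle and H\"older inequalities, $\|f-(f)_M\|_{L^p(M)}\le\|f\|_{L^p(M)}+v(M)^{1/p}|(f)_M|\le\|f\|_{L^p(M)}+v(M)^{1/p}v(M)^{-1/p}\|f\|_{L^p(M)}=2\|f\|_{L^p(M)}\le 2\|f\|_{W^{p,1}(M)}$, so $C=2$ works; but the Poincar\'e--Wirtinger form is what one actually wants, and the argument below yields it.

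First I would argue by contradiction. Were no admissible $C$ to exist, there would be a sequence $f_k\in W^{p,1}(M)$ with $\|f_k-(f_k)_M\|_{L^p(M)}=1$ and $\|\nabla f_k\|_{L^p(M)}\to0$. Setting $g_k:=f_k-(f_k)_M$ gives $(g_k)_M=0$, $\|g_k\|_{L^p(M)}=1$ and $\|\nabla g_k\|_{L^p(M)}\to0$, so $(g_k)$ is bounded in $W^{p,1}(M)$. I would then invoke the Rellich--Kondrachov theorem for the compact manifold $M$ (with boundary): the inclusion $W^{p,1}(M)\hookrightarrow L^p(M)$ is compact, so along a subsequence $g_{k_j}\to g$ strongly in $L^p(M)$. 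Since $M$ is compact, $g_{k_j}\to g$ also in $L^1(M)$ and $\nabla g_{k_j}\to0$ in $L^1(M)$ as well; passing to the limit in the identity $\int_M g_{k_j}\,\Div(Y)\,dv=-\int_M g(\nabla g_{k_j},Y)\,dv$ (valid for $g_{k_j}\in W^{p,1}(M)$ and any compactly supported $C^1$ vector field $Y$, by the definition of the weak gradient) gives $\int_M g\,\Div(Y)\,dv=0$ for all such $Y$, so $g$ has vanishing weak gradient; as $M$ is connected, $g$ equals a constant $v$-a.e. But $(g)_M=\lim_j(g_{k_j})_M=0$ while $\|g\|_{L^p(M)}=\lim_j\|g_{k_j}\|_{L^p(M)}=1$, so that constant is simultaneously $0$ and of norm $1$ --- a contradiction.

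The only real obstacle is the Rellich--Kondrachov compactness of $W^{p,1}(M)\hookrightarrow L^p(M)$ when $\partial M\ne\emptyset$; for closed manifolds this is classical (see, e.g., \cite{hebey2000nonlinear,aubin2012nonlinear}). I would reduce the boundary case to it exactly as in the proof of Proposition~\ref{LisSob}: embed $M$ as a compact submanifold with boundary of a closed manifold $\widehat M$ of the same dimension (for instance the double of $M$), equip $\widehat M$ with any Riemannian metric, use a bounded Sobolev extension operator $W^{p,1}(M)\to W^{p,1}(\widehat M)$ --- available because $\partial M$ is smooth --- apply Rellich--Kondrachov on $\widehat M$, and restrict; compactness of $M$ makes the Sobolev norms attached to different metrics on $\widehat M$ equivalent, so the choice does not matter. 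Two final remarks: the hypothesis $p<n$ is needed only for the sharper compact embedding $W^{p,1}(M)\hookrightarrow L^q(M)$ with $p\le q<np/(n-p)$ and can be dropped for the $L^p$ target; and connectedness of $M$, assumed elsewhere in the paper though not restated here, enters the final step, since on a disconnected $M$ the Poincar\'e--Wirtinger inequality with the global mean $(f)_M$ genuinely fails.
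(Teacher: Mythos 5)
Your proof is correct, but it takes a different route from the paper: the paper disposes of this proposition with a bare citation to Hebey's Theorem~2.10, whereas you give a self-contained compactness--contradiction argument (normalize, extract a subsequence via Rellich--Kondrachov, identify the limit as a constant of mean zero and unit norm). Your preliminary observation is a genuinely valuable catch: as literally written, with $\Vert f\Vert_{W^{p,1}(M)}$ on the right-hand side, the inequality is trivial with $C=2$ by the triangle inequality, so the statement must be read with $\Vert\nabla f\Vert_{L^p(M)}$ on the right --- and indeed that gradient-only form is exactly what the paper invokes later, in the proof of Theorem~\ref{Kerconst}, where $\int_U|h_n-(h_n)_U|\,dv$ is bounded by $\int_U|\nabla h_n|\,dv$. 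Your handling of the boundary case by doubling/extension mirrors the paper's own reduction in Proposition~\ref{LisSob}, and your remarks that $p<n$ is only needed for the sharper $L^{p^*}$ target (the form Hebey actually proves) and that connectedness is essential are both accurate. One small point of care: on a manifold with boundary you should test the weak-gradient identity only against vector fields compactly supported in the interior $M\setminus\partial M$, so that no boundary term appears; this still suffices to conclude that $g$ is locally constant on the connected interior and hence constant $v$-a.e. What the citation buys the paper is brevity; what your argument buys is transparency about which form of the inequality is actually being used downstream and why each hypothesis is (or is not) needed.
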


\begin{proof} See \cite[Theorem 2.10]{hebey2000nonlinear}
\end{proof}

\subsection{Unbounded Operators}
In this section, we present a brief note on unbounded linear operators on Hilbert spaces, which will be used in \S\ref{SPE} for extension of existing Stein operators to more general class of functions. 

Let $\mathcal{H}$ be a Hilbert spaces. An unbounded operator $A: \mathcal{H} \rightarrow \mathcal{H}$
is a linear map from a densely defined proper subset $D(A) \subsetneq \mathcal{H}$ -- the domain of $A$ -- to $\mathcal{H}$. Note that, in contrast, the bounded operator can be defined everywhere on the domain $D(A)=\mathcal{H}$.

\subsubsection*{Extension} An {\em extension} of $A$ is an operator $\Tilde{A}$ such that $D(A)\subset D(\Tilde{A})$ and $\Tilde{A}|_{D(A)}=A$. We write $A\subset \Tilde{A}$ if $\Tilde{A}$ is an extension of $A$. The {\em graph} $G(A)$ of $A$ is the set $G(A):=\{(x,A x):x\in D(A)\}\in\mathcal{H}\times\mathcal{H}$.  If the closure $\overline{G(A)}$ of $G(A)$ in $\mathcal{H}\times\mathcal{H}$ is a graph of an operator, this operator is called the {\em closure} of $A$, denoted by $\overline{A}$. That is, $\overline{A}$ is defined by $G(\overline{A})=\overline{G(A)}$.

\subsubsection*{Adjoint} Given an unbounded operator $A$, define the set
$$ D(A^*):=\left\{\xi\in \mathcal{H}: \text{ The map } x\mapsto\langle A x,\xi\rangle \text{ is bounded on } D(A) \right\}. $$
For $\xi\in D(A^*)$, since the map $x\mapsto\langle A x,\xi  \rangle$ is bounded on a dense subset of $\mathcal{H}$, it extends to a bounded linear map on $\mathcal{H}$. By Riesz representation theorem, such a linear map can be identified with a unique $\eta\in\mathcal{H}$ such that $\langle  A x,\xi \rangle=\langle x,\eta \rangle$ for all $x\in D(A)$. The map $\xi\mapsto \eta$ is then defined as the adjoint of $A$, denoted $A^*$, whose domain is $D(A^*)$.

\vspace{5mm}

An unbounded operator $A$ is {\em closed} if $G(A)$ is closed in $\mathcal{H}\times\mathcal{H}$; {\em closable} if $A$ admits a closed extension; {\em symmetric} if $\langle x,A y\rangle=\langle Ax, y\rangle$ for all $x,y\in D(A)$; {\em self-adjoint} if $A$ is symmetric and $A=A^*$; {\em positive (negative)} if $\langle x,Ax\rangle\geq(\leq)\ 0$ for all $x\in D(A)$.

\begin{proposition}\label{UnbOpe} Suppose $A$ is an unbounded operator on $\mathcal{H}$. Then:
\begin{enumerate}
    \item $A^*$ is always closed. 
    \item $A$ is closable $\iff$ $\overline{T}$ exists $\iff$ $D(A^*)$ is dense, in which case $\overline{T}=T^{**}$. 
    \item If $A$ is symmetric, then $A$ is closable and $A\subset\overline{A}\subset A^*$.
    \item If $A$ is closed, $\ker A={\im A^*}^\perp$.
\end{enumerate}
\end{proposition}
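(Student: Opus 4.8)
The plan is to route everything through the graph of the adjoint on the product Hilbert space $\mathcal{H}\oplus\mathcal{H}$. Let $\tau:\mathcal{H}\oplus\mathcal{H}\to\mathcal{H}\oplus\mathcal{H}$ be the unitary $\tau(x,y):=(-y,x)$. Unwinding the definition of $A^{*}$ recalled above, $(\xi,\eta)\in G(A^{*})$ holds exactly when $\langle Ax,\xi\rangle=\langle x,\eta\rangle$ for every $x\in D(A)$, i.e. when $(\eta,-\xi)\perp G(A)$; equivalently
$$G(A^{*})=\bigl[\tau\,G(A)\bigr]^{\perp}.$$
I would establish this identity first. Statement (1) is then immediate, since an orthogonal complement is closed, so $G(A^{*})$ is a closed subspace and $A^{*}$ is a closed operator.

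For (2) I would proceed in three moves. First, note from the definitions that $\overline{A}$ exists iff $\overline{G(A)}$ is the graph of an operator, i.e. iff $(0,z)\in\overline{G(A)}$ forces $z=0$; and that this is the same as $A$ being closable, because any closed extension $B$ satisfies $\overline{G(A)}\subseteq G(B)$ and $G(B)$ contains no $(0,z)$ with $z\neq0$, while conversely $\overline{A}$ itself, when it exists, is a closed extension. Second, applying the displayed identity to $A$ and pushing $\tau$ through the orthogonal complements (using $\tau(S^{\perp})=(\tau S)^{\perp}$ for $\tau$ unitary, and $\tau^{2}G(A)=G(A)$ since $G(A)$ is a linear subspace) gives the unconditional equality $\bigl[\tau\,G(A^{*})\bigr]^{\perp}=\overline{G(A)}$; once $D(A^{*})$ is dense this left-hand side is $G(A^{**})$ by the same displayed identity applied to $A^{*}$, so $G(A^{**})=\overline{G(A)}=G(\overline{A})$ and hence $\overline{A}=A^{**}$. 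Third, tie density of $D(A^{*})$ to closability: from the displayed identity, $z\perp D(A^{*})$ iff $(z,0)\perp G(A^{*})$ iff $(z,0)\in\overline{\tau\,G(A)}=\tau\,\overline{G(A)}$ iff $(0,-z)\in\overline{G(A)}$, so there is a nonzero $z\perp D(A^{*})$ precisely when $\overline{G(A)}$ fails to be a graph. Combining the three moves yields the full chain of equivalences together with $\overline{A}=A^{**}$.

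Parts (3) and (4) should then drop out. For (3): $A$ symmetric says exactly $D(A)\subseteq D(A^{*})$ with $A^{*}|_{D(A)}=A$, i.e. $A\subseteq A^{*}$; in particular $D(A^{*})\supseteq D(A)$ is dense, so (2) gives that $A$ is closable with $\overline{A}=A^{**}$, and since passing to adjoints reverses inclusions, $A\subseteq A^{*}$ implies $A^{**}\subseteq A^{*}$, i.e. $\overline{A}\subseteq A^{*}$; together with the automatic $A\subseteq\overline{A}$ this is the asserted chain. For (4): if $x\in\ker A$ then $\langle x,A^{*}\xi\rangle=\langle Ax,\xi\rangle=0$ for all $\xi\in D(A^{*})$, so $x\in(\im A^{*})^{\perp}$; conversely, if $x\perp\im A^{*}$ then $\xi\mapsto\langle A^{*}\xi,x\rangle$ is the zero functional on $D(A^{*})$, hence bounded, so $x\in D(A^{**})$ and, using density of $D(A^{*})$ (which holds since a closed operator is closable), $A^{**}x=0$; as $A$ is closed, $A=\overline{A}=A^{**}$ by (2), whence $x\in D(A)$ and $Ax=0$.

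I expect the only genuine friction to lie in the second move of part (2): keeping the signs introduced by $\tau$ straight across the two passes to the adjoint, and, more substantively, being careful that ``$\overline{G(A)}$ is a graph'' and ``$D(A^{*})$ is dense'' are \emph{literally the same} condition rather than merely two separate consequences of closability — that equivalence is what makes the whole statement work. Everything downstream of the identity $G(A^{*})=[\tau\,G(A)]^{\perp}$ is then linear algebra of closed subspaces of $\mathcal{H}\oplus\mathcal{H}$.
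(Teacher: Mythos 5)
Your proof is correct, and it is essentially the argument the paper relies on: the paper offers no proof of its own but cites Reed--Simon \S VIII.1--2, whose treatment is precisely this graph identity $G(A^{*})=[\tau\,G(A)]^{\perp}$ with the unitary $\tau(x,y)=(-y,x)$, from which (1)--(4) follow as you describe. The sign bookkeeping and the identification of ``$\overline{G(A)}$ is a graph'' with ``$D(A^{*})$ is dense'' are handled correctly, so there is nothing to fix.
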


\begin{proof} See \cite[\S VIII.1 \&\ \S VIII.2]{reed1972methods}.
\end{proof}

\section{Main Results}\label{MainResults}

Armed with the background material presented in last section, we are now ready to present the main results of the paper.
We temporarily leave the specific form of the Stein's operator aside to discuss a preparatory question before we proceed to our main results. Suppose $(\mathcal{A}_P,\mathcal{H}_P)$ is an arbitrary Stein pair, and we denote the image of $\mathcal{A}_P$ by $\mathcal{I}_P:=\mathcal{A}_P(\mathcal{H}_P)\subset L_0(P)$. In addition, to utilize the inner product structure, we assume $\mathcal{I}_P$ is a subspace of $L^2_0(P)$.

Note that the characterization ability of $(\mathcal{A}_P,\mathcal{H}_P)$ totally depends on $\mathcal{I}_P$. Another way to portray this characterization ability is to seek a solution to the following questions:

\medskip
\noindent\fcolorbox{black}{lightgray}{%
\parbox{\textwidth}{
\begin{itemize}
\item  Does $Q(f)=P(f)$ for all $f\in \mathcal{I}_P$ imply  $Q=P$ ?

\item Does $Q_n(f)\to P(f)$ for all $f\in \mathcal{I}_P$ imply $Q_n\xrightarrow{w.}P$ ?
    \end{itemize} }
}
\medskip

Some of the relevant notions that have been established and standardized in literature \cite[\S 3.4]{ethier2009markov}. Suppose $Q$, $Q'$ and $Q_n$ are measures. A function class $\mathcal{C}$ is,
\begin{itemize}
    \item {\em separating} if $Q'(f)=Q(f)$ for all $f\in\mathcal{C}$ implies $Q'=Q$,
    \item {\em convergence determining} if $Q_n(f)\to Q(f)$ for all $f\in \mathcal{C}$ implies $Q_n\xrightarrow{w.} Q$.
\end{itemize}

Several studies already exist concerning separating and convergence determining classes \cite{blount2010convergence,lohr2016boundedly}, but the conditions they impose are mostly topological and hard to test in our context. 

However, a direct observation will immediately provide a necessary condition. Note that $L^2_0(P)$ is a Hilbert space, thus if $\mathcal{I}_P$ is not dense in $L^2_0(P)$, there would exist a $\rho\in L^2_0(P)$ such that $\rho\perp \mathcal{I}_P$. If such a $\rho$ is bounded below by some $c\in\mathbb{R}$, then $d Q= (1-c^{-1}\rho) d P$ is a probability measure such that $Q(f)=P(f,1-c^{-1}\rho)=0$ for $f\in\mathcal{I}_P$. Therefore, $\mathcal{I}_P$ does not distinguish $P$ and $Q$ and thus is not separating. A similar argument applies when $\rho$ is bounded above. As a consequence, we may as well make $\mathcal{I}_P$ dense in $L^2_0(P)$ if we expect $(\mathcal{A}_P,\mathcal{H}_P)$ to characterize $P$ or weak convergence.

\subsection{Stein Pair Extension}
\label{SPE}

In this section, we present a solution to the above described problems by seeking to extend the Stein pair using the Friedrichs extension, a well established tool from Mathematical Physics \cite{friedrichs1953differentiability}.

We assume throughout that $(M,g)$ is a connected smooth Riemannian manifold with boundary (possibly empty), but not necessarily complete unless explicitly stated. Specifically, let $n=\dim M$. The target distribution $P$ is assumed to have a density $\frac{d P }{d v}\propto e^{-\phi}\in L(M) $ w.r.t the volume measure $v$, known up to a constant. Note that $P\sim v$ on $M$ since $e^{-\phi}>0$. We assume $e^{-\phi}\in H^1_{\loc}(M)$ so that $\nabla \phi:=e^{\phi}\nabla(e^{-\phi})$ makes sense. The operator $\mathcal{L}_P$ is then given by
$$ \mathcal{L}_P =e^\phi\Div(e^{-\phi}\nabla )=\Delta -g(\nabla \phi,\nabla ). $$

As discussed previously, we wish to collect as many functions with $P(\mathcal{L}_P f)=0$ as possible so as to satisfy the condition that the image of $\mathcal{L}_P$ is at least dense in $L^2_0(P)$. In addition to constant functions, we have

\begin{proposition} For $f\in C^2_c(M)$, $P(\mathcal{L}_P f)=0$.
\end{proposition}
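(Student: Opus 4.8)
The plan is to read $P(\mathcal{L}_P f)=0$ as an instance of the Divergence Theorem in which the weight $e^{-\phi}$ cancels out exactly, and then to dispose of the one genuine technicality, namely that $e^{-\phi}$ is only assumed to lie in $H^1_{\loc}(M)$, not in $C^1$. For the reduction, write $dP=Z^{-1}e^{-\phi}\,dv$ with $Z:=\int_M e^{-\phi}\,dv\in(0,\infty)$ (finite since $e^{-\phi}\in L(M)$); then the given form $\mathcal{L}_P f=e^{\phi}\Div(e^{-\phi}\nabla f)$ gives $e^{-\phi}\,\mathcal{L}_P f=\Div(e^{-\phi}\nabla f)$ almost everywhere, the right-hand side being the divergence of the $H^1_{\loc}$ vector field $e^{-\phi}\nabla f$ (a product of the $H^1_{\loc}$ density with the $C^1$ field $\nabla f$, so that \eqref{DIVfml} applies). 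This divergence lies in $L^2_{\loc}(M)$ and is supported in the compact set $\supp f$, hence is integrable, and $P(\mathcal{L}_P f)=Z^{-1}\int_M\Div(e^{-\phi}\nabla f)\,dv$; so it suffices to show $\int_M\Div(e^{-\phi}\nabla f)\,dv=0$.

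Were $e^{-\phi}$ of class $C^1$, this would be immediate: $Y:=e^{-\phi}\nabla f$ would be a $C^1$ vector field whose support $\supp f$ is a compact subset of the interior $M^\circ$ (the content of the convention defining $C^2_c(M)$ for a manifold with boundary), and the Divergence Theorem would give $\int_M\Div(Y)\,dv=0$, the boundary integral over $\partial M$ vanishing because $Y\equiv 0$ near $\partial M$. In general I would approximate. Fix an open set $U$ with smooth boundary and $\supp f\Subset U\Subset M^\circ$; since $e^{-\phi}\in H^1_{\loc}(M)$, its restriction to $\overline U$ belongs to $H^1(\overline U)$, which by construction is the $H^1$-closure of $C^{\infty}(\overline U)$, so we may choose $u_k\in C^{\infty}(\overline U)$ with $u_k\to e^{-\phi}$ in $H^1(\overline U)$. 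Then $Y_k:=u_k\nabla f$ is a $C^1$ vector field supported in $\supp f\Subset M^\circ$, so $\int_M\Div(Y_k)\,dv=0$ by the $C^1$ case; on the other hand \eqref{DIVfml} gives $\Div(Y_k)=u_k\,\Delta f+g(\nabla u_k,\nabla f)$, and, since $\Delta f$ and $\nabla f$ are bounded on the compact set $\overline U$ while $u_k\to e^{-\phi}$ and $\nabla u_k\to\nabla(e^{-\phi})$ in $L^2(\overline U)$, letting $k\to\infty$ yields $0=\lim_k\int_M\Div(Y_k)\,dv=\int_{\overline U}\big(e^{-\phi}\Delta f+g(\nabla(e^{-\phi}),\nabla f)\big)\,dv=\int_M\Div(e^{-\phi}\nabla f)\,dv$, the last equality once more by \eqref{DIVfml}. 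Hence $P(\mathcal{L}_P f)=0$.

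The reduction and the passage to the limit are routine; the one point that needs care — the main obstacle — is exactly the mismatch between the $C^1$ hypothesis of the Divergence Theorem and the mere $H^1_{\loc}$ regularity of $e^{-\phi}$, which the density/mollification step is meant to bridge. It is also worth flagging that the argument uses that $f$, and hence $\nabla f$, is supported away from $\partial M$ (the content of $C^2_c(M)$ for a manifold with boundary): absent that, a nonzero normal derivative of $f$ along $\partial M$ would leave an uncancelled boundary integral over $\partial M$, and the conclusion would fail.
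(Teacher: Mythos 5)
Your proof is correct and follows essentially the same route as the paper's: approximate $e^{-\phi}$ by smooth functions in $H^1$ on a compact neighborhood of $\supp f$, apply the Divergence Theorem to the resulting $C^1$ fields (the boundary term vanishing since $\nabla f$ is supported away from the boundary), and pass to the limit using \eqref{DIVfml} together with the boundedness of $\Delta f$ and $\nabla f$ on the compact set. The only additions are expository (the explicit reduction via the normalizing constant and the remark about the boundary convention), which do not change the argument.
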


\begin{proof} Consider $\Omega\Subset M$ with a smooth boundary such that $\supp{(f)}\Subset \Omega$. Since $e^{-\phi}\in H^1_{\loc}(M)$, there exists $\psi_n\in C^\infty(\overline{\Omega})$ such that $\psi_n\to e^{-\phi}$ in $H^1(\overline{\Omega})$. Applying \nameref{DivThm}, we have $\int_{\overline{\Omega}}\Div(\psi_n\nabla f) d v=0$ since $\nabla f$ is $0$ on the boundary of $\Omega$. Furthermore, applying property \ref{DIVfml}, we get,
\begin{eqnarray*}
\int_{\overline{\Omega}}\Div(\psi_n\nabla f) d v &=& \int_{\overline{\Omega}} \Delta f\cdot \psi_n - g(\nabla \psi_n,\nabla f) d v\\
&\to& \int_{\overline{\Omega}} \Delta f\cdot e^{-\phi} - g(\nabla e^{-\phi},\nabla f) d v\\
&=& \int_M \mathcal{L}_P f\cdot e^{-\phi} d v=P(\mathcal{L}_P f),
\end{eqnarray*}
as $n\to \infty$. Therefore, we conclude that $P(\mathcal{L}_P f)=\lim_{n\to \infty}\int_{\overline{\Omega}}\Div(\psi_n\nabla f) d v=0 $.
\end{proof}

 Other than constant functions and $C^2_c(M)$, it's difficult in general to determine which kind of functions satisfy $P(\mathcal{L}_P f)=0$. Unfortunately, if we just settle on $C^2_c(M)\oplus \mathbb{R}$, the image of $\mathcal{L}_P$ on $C^2_c(M)\oplus \mathbb{R}$ is not dense in $L^2_0(P)$.

\begin{example} Consider the uniform distribution $U$ on the unit ball $B\subset\mathbb{R}^2$, so the corresponding operator is $\mathcal{L}_U=\Delta$. Note that the function $e^x\cos y$ satisfies $\Delta(e^x\cdot\cos y)=0$, i.e., is harmonic. Integrate by parts and we'll have
$$ \int_B \Delta f \cdot e^x\cos y dxdy=\int_B f\cdot \Delta (e^x\cos y) dxdy=0,\quad \forall f\in C^2_c(B)\oplus \mathbb{R}. $$
Therefore, $e^x\cos y\perp \im\mathcal{L}_P$. Since $e^x\cos y>0$ on $B$, we may normalize it to a density function, then $(\mathcal{L}_U,C^2_c(M)\oplus \mathbb{R})$ can not distinguish $U$ from this density!  
\end{example}

To overcome this problem, the definition of $\mathcal{L}_P$ must be extended to accommodate a larger function class. Note that $C^2_c(M)\oplus \mathbb{R}$ is dense in the Hilbert space $L^2(P)$, so we may try to extend the domain of $\mathcal{L}_P$ to a larger subset of $L^2(P)$, so that the image will at least be dense in $L^2_0(P)$. However, since $\mathcal{L}_P$ is unbounded, this extension is not straightforward and in order to accomplish it, we must invoke the theory of unbounded operators on Hilbert space.

Observe that $\Div(e^{-\phi}h\nabla f )=h\mathcal{L}_P f  e^{-\phi}+g(\nabla f,\nabla h) e^{-\phi}$ for $f,h\in C^2_c(M)\oplus \mathbb{R}$ by property \ref{DIVfml}. The \nameref{DivThm} implies the integration of $\Div(e^{-\phi}h\nabla f )$ over volume measure is $0$, which further leads to $P(\mathcal{L}_P f,h )=-P(\nabla f,\nabla h)$. Since $P(\cdot,\cdot)$ is an inner product, we have that $\mathcal{L}_P$ is symmetric and negative(definite) on $C^2_c(M)\oplus \mathbb{R}$. 

In the theory of unbounded operators, symmetry ensures the existence of such aforementioned extensions to the larger subset of $L^2(P)$. Such extensions, nevertheless, are not unique in general, which depends on the specific method of extension used and the choice of the initial domain. The most common method of extensions are the closure $\overline{\mathcal{L}}_P$ or the adjoint $\mathcal{L}_P^*$. However, on the strength of negative definiteness of the operator, we finally adopt the Friedrichs extension stated in the following theorem.

\begin{theorem*}[Friedrichs extension theorem]\label{FriExt}   Let $A$ be a negative symmetric unbounded operator, then there exists an unique self-adjoint extension $\widehat{A}$ of $A$ corresponding to the quadratic form $-q(\phi,\psi)=(\phi,A\psi)$ for $\phi,\psi\in D(A)$.
\end{theorem*}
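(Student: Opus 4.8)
The plan is to follow the classical form-theoretic construction of the Friedrichs extension and then verify the extension and uniqueness assertions separately; alternatively one may simply invoke \cite[Theorem X.23]{reed1975ii} after the sign change $A\mapsto-A$, which packages exactly this argument. Since $A$ is negative and symmetric, the bilinear form $q(\phi,\psi):=-(\phi,A\psi)=-(A\phi,\psi)$ on $D(A)$ is symmetric and non-negative, so $\langle\phi,\psi\rangle_q:=q(\phi,\psi)+(\phi,\psi)$ is an inner product on $D(A)$ with $\|u\|\le\|u\|_q$. First I would let $\mathcal{Q}$ be the abstract completion of $(D(A),\langle\cdot,\cdot\rangle_q)$ and extend $q$ and $\langle\cdot,\cdot\rangle_q$ to $\mathcal{Q}$ by continuity.

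The key analytic step --- and the one I expect to be the main obstacle --- is to realize $\mathcal{Q}$ concretely \emph{inside} $\mathcal{H}$, i.e.\ to show that the inclusion $D(A)\hookrightarrow\mathcal{H}$ extends to a continuous \emph{injective} map $\mathcal{Q}\to\mathcal{H}$ (closability of the form $q$). Continuity is immediate; injectivity amounts to: if $u_n\in D(A)$ is $\|\cdot\|_q$-Cauchy and $u_n\to0$ in $\mathcal{H}$, then $\|u_n\|_q\to0$. I would prove this by writing $\|u_n\|_q^2=\langle u_n,u_n-u_m\rangle_q+(u_n,(I-A)u_m)$, fixing $\epsilon>0$ and an index $m$ with $\|u_n-u_m\|_q<\epsilon$ for all large $n$, and then letting $n\to\infty$ with $m$ frozen: the second term tends to $0$ since $u_n\to0$ in $\mathcal{H}$ and $(I-A)u_m$ is a fixed vector, giving $\limsup_n\|u_n\|_q^2\le\epsilon\limsup_n\|u_n\|_q$ and hence $\limsup_n\|u_n\|_q\le\epsilon$. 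This is precisely where the symmetry of $A$ enters, through $q(u_n,u_m)=-(u_n,Au_m)$, which allows the unbounded operator to be moved onto the frozen argument.

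Having identified $\mathcal{Q}$ with a dense subspace of $\mathcal{H}$ on which $\bar q$ is a closed non-negative form, I would construct the operator by Riesz representation in the Hilbert space $(\mathcal{Q},\langle\cdot,\cdot\rangle_q)$: for $f\in\mathcal{H}$ the functional $w\mapsto(f,w)$ is $\|\cdot\|_q$-bounded on $\mathcal{Q}$, so there is a unique $Bf\in\mathcal{Q}$ with $\langle Bf,w\rangle_q=(f,w)$ for all $w\in\mathcal{Q}$. Routine checks show $B\colon\mathcal{H}\to\mathcal{H}$ is bounded with $0\le B\le I$, self-adjoint, and injective with dense range, and that $B^{-1}\ge I$. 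Set $\widehat{A}:=I-B^{-1}$ on $D(\widehat{A}):=\operatorname{Range}(B)$; then $\widehat{A}$ is self-adjoint (as $B^{-1}$ is self-adjoint), negative (as $B^{-1}\ge I$), and extends $A$: for $\phi\in D(A)$ the identity $\langle\phi,w\rangle_q=((I-A)\phi,w)$ holds for $w\in D(A)$ and, both sides being $\|\cdot\|_q$-continuous in $w$, extends to all $w\in\mathcal{Q}$, so $\phi=B((I-A)\phi)\in D(\widehat{A})$ and $\widehat{A}\phi=\phi-(I-A)\phi=A\phi$. By construction the form of $-\widehat{A}$ is $\bar q$, which is the meaning of ``corresponding to the quadratic form $-q$''.

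For uniqueness I would show that any self-adjoint extension $\widetilde{A}$ of $A$ associated with the same form $\bar q$ (equivalently, with $D(\widetilde{A})\subseteq\mathcal{Q}$) equals $\widehat{A}$. Given such $\widetilde{A}$ and $u\in D(\widetilde{A})$, set $f:=(I-\widetilde{A})u$; for $w\in D(A)$ one has $\bar q(u,w)=-(u,\widetilde{A}w)=-(u,Aw)$, hence $\langle u,w\rangle_q=(u,(I-A)w)=(f,w)$, and by $\|\cdot\|_q$-density of $D(A)$ in $\mathcal{Q}$ this holds for all $w\in\mathcal{Q}$, so $u=Bf\in D(\widehat{A})$ and $\widehat{A}u=u-f=\widetilde{A}u$. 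Thus $\widetilde{A}\subseteq\widehat{A}$, and since both are self-adjoint, $\widetilde{A}=\widehat{A}$.
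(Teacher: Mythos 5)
Your construction is correct and complete: the closability argument for the form $q$, the Riesz-representation definition of $B$ with $0\le B\le I$, the identification $\widehat{A}=I-B^{-1}$, and the uniqueness among self-adjoint extensions with domain inside the form domain are all carried out soundly, and this is precisely the classical form-theoretic proof packaged in the reference \cite[Theorem X.23]{reed1975ii} (after the sign change $A\mapsto -A$) that the paper simply cites in lieu of a proof. So your proposal matches the paper's (outsourced) argument; no gaps.
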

\begin{proof} See \cite[Theorem X.23]{reed1975ii}.
\end{proof}

Such extension $\widehat{A}$ is called the \emph{Friedrichs extension} of $A$. In our context, 
the operator $A$ in the above theorem refers to the operator $\mathcal{L}_P$ on Hilbert space $L^2(P)$, and corresponding Friedrichs extension will be denoted $\widehat{\mathcal{L}}_P$.

In fact, we have $\overline{\mathcal{L}}_P\subset\widehat{\mathcal{L}}_P\subset\mathcal{L}_P^*$ in general, where $\overline{\mathcal{L}}_P$ is considered to be the smallest extension, but too small to characterize $P$ in our case, and $\mathcal{L}_P^*$ is the largest, but too large so as to let functions failing to satisfy $P(\mathcal{L}_P f)=0$ creep in, while $\widehat{\mathcal{L}}_P$, with a moderate size, serves our needs well.

In the classical theory of unbounded operators on Hilbert spaces, the operator $A$ always comes with a preassigned domain $D(A)$. However, in our approach, one has freedom to choose the domain of $\mathcal{L}_P$. This is because, the initial domain is not only $C^2_c(M)\oplus\mathbb{R}$ but can also be any dense subspace of $L^2(P)$ that satisfies specific conditions. For example, $C^\infty_c(M)$ can serve as an initial domain, on which $\mathcal{L}_P$ satisfies the requirements of Friedrichs extension so that a valid extension can be performed. For both theoretical and practical flexibility, we propose the following general framework:

\medskip
\noindent\fcolorbox{black}{lightgray}{
\parbox{\textwidth}{
\begin{definition}[Extendable] Suppose $\mathcal{H}_P$ is a function class such that
\begin{enumerate}
    \item $\mathcal{H}_P\subset H^2(P)$ is a linear subspace dense in $L^2(P)$,
    \item $ P(\mathcal{L}_P f, h)=-P(\nabla f,\nabla h)$ for all $f,h\in\mathcal{H}_P$,
\end{enumerate}
then, we say $\mathcal{L}_P$ is {\em Friedrichs extendable} (or {\em extendable}) on $\mathcal{H}_P$.
\end{definition} }}
\medskip

The first condition ensures that $\mathcal{L}_P$ is well-defined for functions $\mathcal{H}_P\subset H^2(P)$ and the images $\mathcal{L}_P f$ are $L^2$-integrable. The second condition implies both the symmetrization and negativity of $\mathcal{L}_P$ on $\mathcal{H}_P$. Note that we do not assume that $(\mathcal{L}_P,\mathcal{H}_P)$ is a Stein pair up to this juncture, as it is not necessary for the existence of Friedrichs extension. 

Next, we elaborate the procedure of Friedrichs extension in our context. Suppose $\mathcal{L}_P$ is extendable on $\mathcal{H}_P$. Let $\mathscr{H}_P$ denote the closure of $\mathcal{H}_P$ in $H^1(P)$ and then let $\widehat{\mathcal{H}}_P\subset\mathscr{H}_P$ be the subspace of $f$ such that
\begin{equation}\label{DefofClass}
    \langle h,f\rangle_{H^1(P)}=P(h,f)+P(\nabla h,\nabla f)\leq C\Vert h\Vert_{L^2(P)},\quad \forall h\in\mathscr{H}_P,
\end{equation}
 for some $C>0$. Since $\mathscr{H}_P\supset \mathcal{H}_P$ is dense in $L^2(P)$, $\langle \cdot,f\rangle_{H^1(P)}$ extends to a bounded linear functional on $L^2(P)$. By the Riesz representation theorem \cite[Theorem II.4]{reed1972methods}, $\langle\cdot,f\rangle_{H^1(P)}$ can be identified with a unique $\eta\in L^2(P)$ such that
 \begin{equation}\label{DefofOpe}
     P(h,f)+P(\nabla h,\nabla f)=\langle h,f\rangle_{H^1(P)}=\langle h,\eta\rangle_{L^2(P)}=P(h,\eta),\quad \forall h\in \mathscr{H}_P.
 \end{equation}
 Hereby, $\widehat{\mathcal{L}}_P f$ is defined as $f-\eta$, which is consistent with the property of the initial pair $(\mathcal{L}_P,\mathcal{H}_P)$. Recall that for $f,h\in\mathcal{H}_P$, there is $P(h,f)+P(\nabla h,\nabla f)=P(h,f-\mathcal{L}_P f)$
by the second condition of extendability.
The \nameref{FriExt} further ensures that $\widehat{\mathcal{L}}_P$ is self-adjoint on $L^2(P)$ with $D(\widehat{\mathcal{L}}_P)=\widehat{\mathcal{H}}_P$ and maintains the property
\begin{equation}\label{FriPro}
    P(\widehat{\mathcal{L}}_P f, h)=-P(\nabla f,\nabla h),\quad f,h\in \widehat{\mathcal{H}}_P\subset H^1(P).
\end{equation}

At this juncture, the reader is cautioned that the extension $(\widehat{\mathcal{L}}_P,\widehat{\mathcal{H}}_P)$ can not be defined independently of the choice of initial domain $\mathcal{H}_P$, hence the term $\widehat{\mathcal{L}}_P$ must be used when $\mathcal{H}_P$ is already explicitly assigned. In such a case, we agree that the domain of $\mathcal{L}_P$ is $\mathcal{H}_P$, the domain of $\widehat{\mathcal{L}}_P$ is $\widehat{\mathcal{H}}_P$, and discuss their kernels or images without explicitly naming their domain.

The essence of Friedrichs extension is self-adjointness. For a closed operator $A$, there is $\ker A=(\im A^*)^\perp$ by proposition \ref{UnbOpe}. If $A$ is self-adjoint, i.e., $A=A^*$, we have $\ker A=(\im A)^\perp$. All aforementioned extensions, $\overline{\mathcal{L}}_P$ and $\mathcal{L}_P^*$, remain symmetric, but only the Friedrichs extension $\widehat{\mathcal{L}}_P$ is always self-adjoint and thus satisfies $\ker\widehat{\mathcal{L}}_P=(\im\widehat{\mathcal{L}}_P)^\perp$. Note that this property allows us to study the space, $\im\widehat{\mathcal{L}}_P$, which is unwieldy, by simply studying $\ker\widehat{\mathcal{L}}_P$, which is much easier to characterize. The following proposition captures this essence.

\begin{proposition}\label{SPchar} Suppose $\mathcal{L}_P$ is extendable on $\mathcal{H}_P$.
\begin{enumerate}
     \item $(\widehat{\mathcal{L}}_P,\widehat{\mathcal{H}}_P)$ is a Stein pair if and only if $\ker\widehat{\mathcal{L}}_P\supset \mathbb{R}$.
    \item $\im\widehat{\mathcal{L}}_P$ is dense in $L^2_0(P)$ if and only if $\ker\widehat{\mathcal{L}}_P=\mathbb{R}$.
\end{enumerate}
\end{proposition}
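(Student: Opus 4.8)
The plan is to derive both items from the single structural fact supplied by the Friedrichs extension theorem, namely that $\widehat{\mathcal{L}}_P$ is self-adjoint on $L^2(P)$, combined with the orthogonal decomposition of $L^2(P)$ induced by the constants. Since $P$ is a probability measure, $1\in L^2(P)$ with $\langle f,1\rangle_{L^2(P)}=P(f)$, so $L^2_0(P)=\{1\}^\perp$ and, reciprocally, $\mathbb{R}=L^2_0(P)^\perp$, giving the orthogonal splitting $L^2(P)=\mathbb{R}\oplus L^2_0(P)$. Moreover $\widehat{\mathcal{L}}_P=\widehat{\mathcal{L}}_P^*$ is closed, since adjoints are always closed (item 1 of Proposition \ref{UnbOpe}), so item 4 of Proposition \ref{UnbOpe} applies and yields $\ker\widehat{\mathcal{L}}_P=(\im\widehat{\mathcal{L}}_P)^\perp$, equivalently $\overline{\im\widehat{\mathcal{L}}_P}=(\ker\widehat{\mathcal{L}}_P)^\perp$, with all orthocomplements and closures taken in $L^2(P)$.

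For item 1, I would first note that every element of $\im\widehat{\mathcal{L}}_P$ lies in $L^2(P)\subset L^1(P)$, so that for a random variable $X$ with $Q_X=P$ the quantity $E[(\widehat{\mathcal{L}}_P f)(X)]$ is well defined and equals $P(\widehat{\mathcal{L}}_P f)$; hence $(\widehat{\mathcal{L}}_P,\widehat{\mathcal{H}}_P)$ is a Stein pair precisely when $P(\widehat{\mathcal{L}}_P f)=0$ for all $f\in\widehat{\mathcal{H}}_P$. Writing $P(\widehat{\mathcal{L}}_P f)=\langle\widehat{\mathcal{L}}_P f,1\rangle_{L^2(P)}$, this condition reads $1\perp\im\widehat{\mathcal{L}}_P$, i.e. $1\in(\im\widehat{\mathcal{L}}_P)^\perp=\ker\widehat{\mathcal{L}}_P$; since $\ker\widehat{\mathcal{L}}_P$ is a linear subspace, this is the same as $\mathbb{R}\subset\ker\widehat{\mathcal{L}}_P$. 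For the ``if'' direction one may alternatively invoke \eqref{FriPro} with $h=1\in\widehat{\mathcal{H}}_P$, obtaining $P(\widehat{\mathcal{L}}_P f)=P(\widehat{\mathcal{L}}_P f,1)=-P(\nabla f,\nabla 1)=0$.

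For item 2, I would work entirely in the Hilbert space. If $\ker\widehat{\mathcal{L}}_P=\mathbb{R}$, then $\overline{\im\widehat{\mathcal{L}}_P}=(\ker\widehat{\mathcal{L}}_P)^\perp=\mathbb{R}^\perp=L^2_0(P)$, so $\im\widehat{\mathcal{L}}_P$ is dense in $L^2_0(P)$. Conversely, if $\im\widehat{\mathcal{L}}_P$ is dense in $L^2_0(P)$, then, $L^2_0(P)$ being closed in $L^2(P)$, one has $\overline{\im\widehat{\mathcal{L}}_P}=L^2_0(P)$, whence $\ker\widehat{\mathcal{L}}_P=(\im\widehat{\mathcal{L}}_P)^\perp=L^2_0(P)^\perp=\mathbb{R}$.

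The calculations are routine; the two points deserving attention are the faithful translation of the measure-theoretic Stein-pair condition into the orthogonality statement $1\perp\im\widehat{\mathcal{L}}_P$, and the use of $\ker A=(\im A^*)^\perp$ in the self-adjoint form $\ker\widehat{\mathcal{L}}_P=(\im\widehat{\mathcal{L}}_P)^\perp$ -- which is exactly the place where self-adjointness, and hence the Friedrichs extension rather than $\overline{\mathcal{L}}_P$ or $\mathcal{L}_P^*$ (merely symmetric in general), is essential. I do not expect a substantive obstacle: the proposition is in essence the remark that self-adjointness trades the unwieldy set $\im\widehat{\mathcal{L}}_P$ for the tractable set $(\ker\widehat{\mathcal{L}}_P)^\perp$.
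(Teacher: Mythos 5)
Your proof is correct and follows exactly the paper's route: the paper's entire proof is the one-line observation that self-adjointness of the Friedrichs extension gives $\ker\widehat{\mathcal{L}}_P=(\im\widehat{\mathcal{L}}_P)^\perp$, and your write-up simply makes explicit the two routine translations (the Stein-pair condition as $1\perp\im\widehat{\mathcal{L}}_P$, and density in $L^2_0(P)=\mathbb{R}^\perp$ as $\ker\widehat{\mathcal{L}}_P=\mathbb{R}$) that the paper leaves implicit.
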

\begin{proof}
This is straightforward from the property $\ker\widehat{\mathcal{L}}_P=(\im\widehat{\mathcal{L}}_P)^\perp$.
\end{proof}

\begin{remark} Here $\mathbb{R}$ represents the subspace of all constant functions. Note that $1$ must be in $\ker\widehat{\mathcal{L}}_P$ if $1\in\widehat{\mathcal{H}}_P$, thus $\ker\widehat{\mathcal{L}}_P\supset\mathbb{R}$ is equivalent to $1\in\widehat{\mathcal{H}}_P$.
\end{remark}

 If there exists some $f\in\mathcal{H}_P$ with $P(\mathcal{L}_P f)\neq 0$, then $1$ must not be in $\widehat{\mathcal{H}}_P$, as well as any constant functions, since $P(\mathcal{L}_P f,1)=-P(\nabla f,\nabla 1 )=0$. In practice, we will certainly not choose such an $\mathcal{H}_P$ as our initial domain. However, on the other hand, though only an initial domain $\mathcal{H}_P$ on which $P(\mathcal{L}_P f)=0$ will be selected, it is typically hard to know whether $1\in\widehat{\mathcal{H}}_P$ holds true, unless $1$ is already in the initial domain $\mathcal{H}_P$.  Therefore, we may as well choose a $\mathcal{H}_P$ containing constant functions beforehand so as to let  $\ker\widehat{\mathcal{L}}_P\supset\mathbb{R}$.

In addition to the above discussion on  $\ker\widehat{\mathcal{L}}_P\supset \mathbb{R}$, a mild regularity condition will ensure that the converse statement also holds, namely, that constant functions are the only ones in $\ker\widehat{\mathcal{L}}_P$, i.e., $\ker\widehat{\mathcal{L}}_P\subset\mathbb{R}$. Surprisingly, this is independent of the choice of $\mathcal{H}_P$. This mild regularity condition is formalized in the following theorem.

\begin{theorem}\label{Kerconst} If $e^\phi\in L^1_{\loc}(M)$, then $\ker\widehat{\mathcal{L}}_P\subset\mathbb{R}$.
\end{theorem}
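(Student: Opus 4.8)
The plan is to exploit the self-adjointness built into the Friedrichs extension, which yields the identity \eqref{FriPro}, together with the connectedness of $M$; the role of the hypothesis $e^{\phi}\in L^1_{\loc}(M)$ will be to transfer the problem from the weighted space $L^2(P)$ to the ordinary volume-measure Sobolev setting, where connectedness can be used. Write $c>0$ for the normalizing constant, so $\tfrac{dP}{dv}=c\,e^{-\phi}$, and fix $f\in\ker\widehat{\mathcal{L}}_P$; the goal is to show $f$ coincides $P$-a.e. (equivalently $v$-a.e., since $P\sim v$) with a constant. First I would apply \eqref{FriPro} with $h=f$: since $\widehat{\mathcal{L}}_P f=0$,
$$0=P(\widehat{\mathcal{L}}_P f,f)=-P(\nabla f,\nabla f)=-\Vert\nabla f\Vert_{L^2(P)}^2,$$
so $\nabla f=0$ $P$-a.e., hence $v$-a.e. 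What remains is to upgrade ``$\nabla f=0$ a.e.'' to ``$f$ is a.e. constant''.

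Second, I would record the elementary embedding $L^2_{\loc}(P)\hookrightarrow L^1_{\loc}(M^\circ)$. For a compact $K\Subset M^\circ$ and $u\in L^2(K;P)$, Cauchy--Schwarz with respect to $P$ together with the identity $\int_K e^{2\phi}\,dP=c\int_K e^{\phi}\,dv$ gives
$$\int_K|u|\,dv=c^{-1}\!\int_K|u|\,e^{\phi}\,dP\le c^{-1/2}\,\Vert u\Vert_{L^2(K;P)}\Big(\int_K e^{\phi}\,dv\Big)^{1/2},$$
which is finite precisely because $e^{\phi}\in L^1_{\loc}(M)$. Taking $u=f$ and $u=|\nabla f|$ gives $f,|\nabla f|\in L^1_{\loc}(M^\circ)$, and, crucially, shows that $L^2(P)$-convergence implies $L^1(K)$-convergence for every compact $K\Subset M^\circ$.

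Third --- this is the heart of the matter --- I would identify $\nabla f$ (the gradient carried by $f$ as an element of the Hilbert space $H^1(P)$) with the distributional gradient of $f$ with respect to the volume measure on the boundaryless manifold $M^\circ$. Since $f\in\widehat{\mathcal{H}}_P\subset\mathscr{H}_P=\overline{\mathcal{H}_P}^{\,H^1(P)}$ and $\mathcal{H}_P\subset H^2(P)$, a diagonal argument produces smooth functions $\psi_m$ with $\psi_m\to f$ and $\nabla\psi_m\to\nabla f$ in $L^2(P)$. For any smooth vector field $Y$ with compact support in $M^\circ$, the divergence theorem gives $\int_{M^\circ}\psi_m\,\Div Y\,dv=-\int_{M^\circ}g(\nabla\psi_m,Y)\,dv$ with no boundary term, and passing to the limit using the $L^1_{\loc}(M^\circ)$-convergence from the previous step yields $\int_{M^\circ}f\,\Div Y\,dv=-\int_{M^\circ}g(\nabla f,Y)\,dv$. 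Hence $f\in W^{1,1}_{\loc}(M^\circ)$ with weak gradient $\nabla f=0$ $v$-a.e. Finally I would invoke connectedness: $M^\circ$ is a connected boundaryless manifold, and a $W^{1,1}_{\loc}$ function on a connected open manifold whose weak gradient vanishes a.e. is a.e. equal to a constant (this is local, by mollification in coordinate charts, and propagates by connectedness). Thus $f$ is $v$-a.e. constant on $M^\circ$, hence $v$-a.e. constant on $M$ since $v(\partial M)=0$, hence $P$-a.e. constant; that is, $f\in\mathbb{R}$, so $\ker\widehat{\mathcal{L}}_P\subset\mathbb{R}$.

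I expect the third step to be the main obstacle: matching the abstract gradient encoded in the weighted Sobolev space $H^1(P)$ with the genuine weak gradient on $(M^\circ,v)$ is exactly where integrability of $e^{\phi}$ is indispensable. Without it, $H^1(P)$ could carry ``gradients'' that are not weak gradients with respect to $v$ --- for instance, if $e^{-\phi}$ degenerated to infinite order along a hypersurface separating $M$ into two pieces, a function equal to $0$ on one piece and $1$ on the other would lie in $\ker\widehat{\mathcal{L}}_P$ and violate the conclusion --- so the hypothesis is genuinely needed and not merely technical. The bookkeeping around the boundary (passing to $M^\circ$, using $v(\partial M)=0$ and connectedness of $M^\circ$) is routine but should be stated with care.
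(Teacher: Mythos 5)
Your proof is correct and follows essentially the same route as the paper's: both deduce $\Vert\nabla f\Vert_{L^2(P)}=0$ from the self-adjointness identity \eqref{FriPro}, then use $e^\phi\in L^1_{\loc}(M)$ together with Cauchy--Schwarz to pass from the weighted $L^2(P)$ setting to unweighted local $L^1$ control of the approximating smooth sequence, and conclude by local constancy plus connectedness of $M$. The only difference is the final local step --- the paper applies the $L^1$ Poincar\'e inequality on precompact neighborhoods, whereas you identify the vanishing weak gradient with respect to $v$ and invoke the standard mollification fact --- and your explicit verification that $L^2(P)$-convergence implies $L^1_{\loc}(v)$-convergence actually makes precise a step the paper leaves implicit.
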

\begin{proof} Please see Appendix \ref{PrfKerconst}.
\end{proof}

The reader should note that theorem \ref{Kerconst} imposes a relatively weak condition, i.e., $e^\phi\in L^1_{\loc}(M)$, that is satisfied by all continuous densities, as they are locally bounded below by positive constants. At this juncture, from proposition \ref{SPchar} and theorem \ref{Kerconst}, we can conclude that $\im\widehat{\mathcal{L}}_P$ is dense in $L^2_0(P)$ with both $\ker\widehat{\mathcal{L}}_P\supset\mathbb{R}$ and $\ker\widehat{\mathcal{L}}_P\subset\mathbb{R}$ being satisfied under the aforementioned mild regularity condition. 

In summary, in this section we have presented a general recipe (framework) to extend the Stein pair for distributional comparisons to be applicable to a larger class of distributions. We are now ready to study the characterization scope of the class of distributions that can be distinguished using this extended Stein pair.

\subsection{Characterization Scope of the Extended Stein Pair
}

In this section we will study the scope of the class of distributions that can be discriminated using the Stein method and present novel techniques to substantially increase this scope. In achieving this increased scope, we will adaptively determine the stein pair along with other appropriate regularity conditions on the manifolds and the target distributions respectively.

Without any further assumptions imposed, $\im\widehat{\mathcal{L}}_P$ is only known  to be dense in $L^2_0(P)$. In fact, a dense subspace in $L^2_0(P)$ is already fairly rich so that $(\widehat{\mathcal{L}}_P,\widehat{\mathcal{H}}_P)$ will distinguish $P$ between a considerable scope of distributions.
However, recall that the elements in $L^2(P)$ are not really functions, but equivalence classes of functions distinct up to a $P$-null set. In order for the integration, $E[\widehat{\mathcal{L}}_Pf(X)]$, to make sense, the characterization only 
encapsulates situations where,
$X$ with $Q_{X}\ll P$, or equivalently, $Q_{X}\ll v$. As for the characterization of weak convergence, different regularity conditions lead to different characterization scope of distributions. The following theorem captures the characterization scope of the Stein pair under different regularity conditions.

\begin{theorem}\label{CC} Suppose $\mathcal{L}_P$ is extendable on $\mathcal{H}_P$ and $\ker\widehat{\mathcal{L}}_P=\mathbb{R}$. Suppose $X$ and $X_n$ are $M$-valued r.v. with $Q_X,Q_{X_n}\ll P\sim\lambda$.

\begin{enumerate}[(C1)]

\item If $\frac{d Q_X}{d P}\in L^2(P)$, then
    $$Q_X= P \iff E[\widehat{\mathcal{L}}_Pf(X)]=0,\ \forall f\in\widehat{\mathcal{H}}_P; $$

\item If $\sup \Vert \frac{d Q_{X_n}}{d P} \Vert_{L^2{(P)}}<+\infty$, then
$$\frac{d Q_{X_n}}{d P}\xrightarrow{w.} 1\ \text{\em in } L^2(P) \iff  E[\widehat{\mathcal{L}}_Pf(X_n)]\to 0,\ \forall f\in\widehat{\mathcal{H}}_P; $$

\item If $\im\widehat{\mathcal{L}}_P=L^2_0(P)$, that is, $\widehat{\mathcal{L}}_P$ is surjective, then 
$$\frac{d Q_{X_n}}{d P}\xrightarrow{w.} 1\ \text{\em in } L^2(P) \iff  E[\widehat{\mathcal{L}}_Pf(X_n)]\to 0,\ \forall f\in\widehat{\mathcal{H}}_P. $$
\end{enumerate}
\end{theorem}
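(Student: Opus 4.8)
The plan is to exploit the self-adjointness of $\widehat{\mathcal{L}}_P$ together with the identification of $L^2_0(P)$ with the $L^2(P)$-density perturbations of $P$. Write $\rho_X := \frac{d Q_X}{d P}$ and $\rho_n := \frac{d Q_{X_n}}{d P}$. For any $f\in\widehat{\mathcal{H}}_P$ the expectation rewrites as $E[\widehat{\mathcal{L}}_P f(X)] = Q_X(\widehat{\mathcal{L}}_P f) = P(\widehat{\mathcal{L}}_P f,\rho_X) = \langle \widehat{\mathcal{L}}_P f,\rho_X\rangle_{L^2(P)}$, and similarly with $\rho_n$; note $P(\widehat{\mathcal{L}}_P f) = \langle \widehat{\mathcal{L}}_P f,1\rangle = 0$ since $1\in\ker\widehat{\mathcal{L}}_P$ (using $\ker\widehat{\mathcal{L}}_P=\mathbb{R}$) and self-adjointness, so we may freely subtract constants; in particular $\langle \widehat{\mathcal{L}}_P f,\rho_X\rangle = \langle \widehat{\mathcal{L}}_P f,\rho_X-1\rangle$ and $\rho_X-1\in L^2_0(P)$. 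By Proposition \ref{SPchar} and $\ker\widehat{\mathcal{L}}_P=\mathbb{R}$, $\im\widehat{\mathcal{L}}_P$ is dense in $L^2_0(P)$, and in case (C3) it equals $L^2_0(P)$.

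**Proof of (C1).** The forward direction is immediate from the Stein pair property (Proposition \ref{SPchar}(1), valid since $\ker\widehat{\mathcal{L}}_P=\mathbb{R}\supset\mathbb{R}$). For the converse, assume $\rho_X\in L^2(P)$ and $\langle \widehat{\mathcal{L}}_P f,\rho_X-1\rangle_{L^2(P)}=0$ for all $f\in\widehat{\mathcal{H}}_P$. Then $\rho_X-1\in(\im\widehat{\mathcal{L}}_P)^\perp$ inside $L^2_0(P)$; since $\im\widehat{\mathcal{L}}_P$ is dense in $L^2_0(P)$ and $\rho_X-1\in L^2_0(P)$, this forces $\rho_X-1=0$ in $L^2(P)$, i.e. $\rho_X=1$ $P$-a.e., hence $Q_X=P$.

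**Proof of (C2) and (C3).** Here "$\rho_n\xrightarrow{w.}1$ in $L^2(P)$" should be read as weak convergence in the Hilbert space $L^2(P)$, i.e. $\langle \rho_n,\psi\rangle_{L^2(P)}\to\langle 1,\psi\rangle_{L^2(P)}$ for all $\psi\in L^2(P)$; equivalently $\langle \rho_n-1,\phi\rangle\to 0$ for all $\phi\in L^2_0(P)$ (split $\psi$ into its mean and its centered part, and note $\langle\rho_n-1,\mathrm{const}\rangle=0$). The forward implication in both is routine: if $\rho_n\rightharpoonup 1$ then for fixed $f$, $\widehat{\mathcal{L}}_P f\in L^2_0(P)$, so $E[\widehat{\mathcal{L}}_P f(X_n)]=\langle\widehat{\mathcal{L}}_P f,\rho_n-1\rangle\to 0$ — and this needs no boundedness hypothesis. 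For the converse of (C3): $\im\widehat{\mathcal{L}}_P=L^2_0(P)$ means every $\phi\in L^2_0(P)$ is of the form $\widehat{\mathcal{L}}_P f$, so $E[\widehat{\mathcal{L}}_P f(X_n)]\to 0$ for all $f$ gives $\langle\phi,\rho_n-1\rangle\to 0$ for all $\phi\in L^2_0(P)$, which is exactly $\rho_n\rightharpoonup 1$; surjectivity does all the work and the uniform $L^2$-bound is not needed (it is implied, e.g. by Banach--Steinhaus, once weak convergence holds). For the converse of (C2): we only have $\im\widehat{\mathcal{L}}_P$ dense in $L^2_0(P)$, so $E[\widehat{\mathcal{L}}_P f(X_n)]\to 0$ gives $\langle\phi,\rho_n-1\rangle\to 0$ only for $\phi$ in a dense subset; to upgrade to all $\phi\in L^2_0(P)$ we invoke the uniform bound $\sup_n\|\rho_n\|_{L^2(P)}<\infty$: a sequence in a Hilbert space that is norm-bounded and tested successfully against a dense set converges weakly (standard $\varepsilon/3$ argument), giving $\rho_n\rightharpoonup 1$.

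**Main obstacle.** The delicate points are bookkeeping rather than deep: first, pinning down the precise meaning of "$\xrightarrow{w.}1$ in $L^2(P)$" as Hilbert-space weak convergence and checking it is equivalent to testing against $L^2_0(P)$ (handling the constant component); second, the density-versus-surjectivity dichotomy — making clear why (C2) genuinely needs the uniform $L^2$-bound to pass from a dense test family to all of $L^2_0(P)$, whereas (C3) does not. A secondary technical check is that $\widehat{\mathcal{L}}_P f\in L^2(P)$ with $P(\widehat{\mathcal{L}}_P f)=0$ for every $f\in\widehat{\mathcal{H}}_P$ (so that the pairings above are legitimate and land in $L^2_0(P)$), which follows from $D(\widehat{\mathcal{L}}_P)=\widehat{\mathcal{H}}_P\subset L^2(P)$, self-adjointness, and $1\in\ker\widehat{\mathcal{L}}_P$.
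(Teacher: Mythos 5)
Your treatments of (C1) and (C2) are correct and essentially identical to the paper's: for (C1), orthogonality of $\rho_X-1\in L^2_0(P)$ to the dense subspace $\im\widehat{\mathcal{L}}_P$ of $L^2_0(P)$; for (C2), the standard upgrade from convergence against a dense family to weak convergence using the uniform $L^2$ bound.

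There is, however, a genuine gap in your converse direction of (C3). The whole point of (C3), as distinct from (C2), is that the hypothesis $\sup_n\Vert\rho_n\Vert_{L^2(P)}<\infty$ is dropped --- indeed it is not even assumed that $\rho_n\in L^2(P)$. Your argument writes $E[\widehat{\mathcal{L}}_P f(X_n)]=\langle\phi,\rho_n-1\rangle_{L^2(P)}$ with $\phi=\widehat{\mathcal{L}}_P f$ ranging over all of $L^2_0(P)$ and concludes that this ``is exactly $\rho_n\rightharpoonup 1$''; but weak convergence in the Hilbert space $L^2(P)$ only makes sense once $\rho_n\in L^2(P)$, which is precisely what must be established. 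Your parenthetical appeal to Banach--Steinhaus is circular: that theorem bounds a pointwise-bounded family of \emph{bounded} functionals, whereas here the functional $h\mapsto E[h(X_n)]$ on $L^2_0(P)$ is only known (via the paper's integrability convention and surjectivity) to be everywhere defined, not bounded. The paper closes exactly this gap with a Baire-category argument: it sets $A_{m,n}:=\{h\in L^2_0(P): E|h(X_k)|\le 1/m,\ \forall k\ge n\}$, uses surjectivity and the hypothesis to get $\bigcup A_{m,n}=L^2_0(P)$, Fatou's lemma to see each $A_{m,n}$ is closed, and Baire's theorem to extract a closed ball $\overline{B}_r(h_0)\subset A_{m_0,n_0}$, whence $E|h(X_k)|\le 2(r m_0)^{-1}\Vert h\Vert_{L^2(P)}$ for all $h\in L^2_0(P)$ and $k\ge n_0$. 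This shows the functionals are uniformly bounded for large $k$, hence by Riesz representation $\rho_k\in L^2(P)$ with a uniform norm bound, and (C3) then reduces to (C2). Without some such argument (Baire or closed-graph), your proof of (C3) assumes the conclusion's key ingredient.
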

\begin{proof}
Please see Appendix \ref{PrfCC}.
\end{proof}

\begin{remark} Note that when using the expression, $E[\widehat{\mathcal{L}}_P f(X)]=0$, it is implicitly understood that $\widehat{\mathcal{L}}_P$ is $Q_X$-integrable, i.e., $E\big|\widehat{\mathcal{L}}_P f(X)\big|<+\infty$. Further, when using the expression, $ E[\widehat{\mathcal{L}}_Pf(X_n)]\to 0$, it is understood that $\widehat{\mathcal{L}}_P f$ is $Q_{X_n}$-integrable except at a finite number of $n$.
\end{remark}

\begin{remark} The term $\frac{d Q_{X_n}}{d P}\xrightarrow{w.} 1$ in $L^2(P)$ stands for the convergence in weak topology of $L^2(P)$ as a Hilbert space. Such a convergence implies the weak convergence $Q_{X_n}\xrightarrow{w.} P$ in distribution sense.
\end{remark}

\subsubsection{Essential Self-adjointness}\label{ESA}

Regardless of the specific choice of initial domain $\mathcal{H}_P$, the pair $(\widehat{\mathcal{L}}_P,\widehat{\mathcal{H}}_P)$ based on Friedrichs extension is typically highly implicit, since $\widehat{\mathcal{L}}_P$ is defined in weak sense and $\widehat{\mathcal{H}}_P$ relies on the completion under $H^1$ norm. As opposed to the extended pair $(\widehat{\mathcal{L}}_P,\widehat{\mathcal{H}}_P)$, we may wish to study the characterization scope of $(\mathcal{L}_P,\mathcal{H}_P)$ itself, which is typically explicitly specified in most cases.
Fortunately, the framework we present in this work provides us a means to this end.

\begin{theorem}\label{ESAchar} Suppose $\mathcal{L}_P$ is extendable on $\mathcal{H}_P$. If $\mathcal{L}_P$ is essentially self-adjoint on $\mathcal{H}_P$, then $\im\mathcal{L}_P$ is dense in $\im\widehat{\mathcal{L}}_P$. In such a case, $(\widehat{\mathcal{L}}_P,\widehat{\mathcal{H}}_P)$ is a Stein pair if 
$(\mathcal{L}_P,\mathcal{H}_P)$ is a Stein pair, even when $1\notin\mathcal{H}_P$.
\end{theorem}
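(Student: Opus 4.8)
The plan is to exploit the defining feature of essential self-adjointness: a symmetric operator that is essentially self-adjoint has a \emph{unique} self-adjoint extension, which therefore must coincide simultaneously with its closure and with its Friedrichs extension. Concretely, since $\mathcal{L}_P$ is essentially self-adjoint on $\mathcal{H}_P$, the closure $\overline{\mathcal{L}}_P$ is self-adjoint. On the other hand $\widehat{\mathcal{L}}_P$ is a self-adjoint — hence closed — extension of $\mathcal{L}_P$, so $\overline{\mathcal{L}}_P\subset\widehat{\mathcal{L}}_P$ (this is also the left-hand inclusion in the chain $\overline{\mathcal{L}}_P\subset\widehat{\mathcal{L}}_P\subset\mathcal{L}_P^*$ noted earlier). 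Two self-adjoint operators related by inclusion must be equal: if $S\subset T$ then $T=T^*\subset S^*=S$. Thus $\overline{\mathcal{L}}_P=\widehat{\mathcal{L}}_P$, so $\widehat{\mathcal{H}}_P=D(\overline{\mathcal{L}}_P)$ and $\mathcal{H}_P$ is a core for $\widehat{\mathcal{L}}_P$: every $f\in\widehat{\mathcal{H}}_P$ is a graph-limit of a sequence $f_n\in\mathcal{H}_P$, i.e. $f_n\to f$ and $\mathcal{L}_P f_n\to\widehat{\mathcal{L}}_P f$ in $L^2(P)$.

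From this the first assertion is immediate. Since $\widehat{\mathcal{L}}_P$ extends $\mathcal{L}_P$ we have $\im\mathcal{L}_P\subset\im\widehat{\mathcal{L}}_P$; conversely, each $\widehat{\mathcal{L}}_P f=\lim_n\mathcal{L}_P f_n$ lies in the $L^2(P)$-closure of $\im\mathcal{L}_P$, so $\im\widehat{\mathcal{L}}_P\subset\overline{\im\mathcal{L}_P}$. Taking closures gives $\overline{\im\mathcal{L}_P}=\overline{\im\widehat{\mathcal{L}}_P}$, i.e. $\im\mathcal{L}_P$ is dense in $\im\widehat{\mathcal{L}}_P$.

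For the second assertion, suppose $(\mathcal{L}_P,\mathcal{H}_P)$ is a Stein pair and apply the defining implication to an r.v. $X$ with $Q_X=P$ (take $X$ to be the identity on $(M,\mathscr{B}(M),P)$): this yields $P(\mathcal{L}_P f)=\int_M\mathcal{L}_P f\,dv\,\frac{dP}{dv}=0$ for every $f\in\mathcal{H}_P$, i.e. the constant $1$ is orthogonal to $\im\mathcal{L}_P$ in $L^2(P)$. Since $\widehat{\mathcal{L}}_P$ is self-adjoint, $\ker\widehat{\mathcal{L}}_P=(\im\widehat{\mathcal{L}}_P)^\perp$ by Proposition \ref{UnbOpe}, and because $\overline{\im\widehat{\mathcal{L}}_P}=\overline{\im\mathcal{L}_P}$ we get $(\im\widehat{\mathcal{L}}_P)^\perp=(\im\mathcal{L}_P)^\perp\ni 1$. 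Hence $1\in\ker\widehat{\mathcal{L}}_P$, so $\mathbb{R}\subset\ker\widehat{\mathcal{L}}_P$ — this holds even when $1\notin\mathcal{H}_P$, the $H^1(P)$-completion in the Friedrichs construction having absorbed the constants — and Proposition \ref{SPchar}(1) then gives that $(\widehat{\mathcal{L}}_P,\widehat{\mathcal{H}}_P)$ is a Stein pair.

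The main obstacle is the first paragraph: one must justify that essential self-adjointness forces $\overline{\mathcal{L}}_P=\widehat{\mathcal{L}}_P$, which rests on the maximality of self-adjoint operators among symmetric extensions together with the inclusion $\overline{\mathcal{L}}_P\subset\widehat{\mathcal{L}}_P$. Once this identification is in hand, the remainder is routine bookkeeping with cores, images and orthogonal complements via Proposition \ref{UnbOpe} and Proposition \ref{SPchar}, requiring no further analytic input.
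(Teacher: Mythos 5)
Your proposal is correct and follows essentially the same route as the paper: essential self-adjointness forces $\overline{\mathcal{L}}_P=\widehat{\mathcal{L}}_P$ (by maximality of self-adjoint operators among symmetric extensions), so $\mathcal{H}_P$ is a core and $\im\mathcal{L}_P$ is dense in $\im\widehat{\mathcal{L}}_P$. The only cosmetic difference is in the last step, where the paper passes to the limit in $P(\mathcal{L}_P f_n)=0$ directly while you route through $\ker\widehat{\mathcal{L}}_P=(\im\widehat{\mathcal{L}}_P)^\perp$ and Proposition \ref{SPchar}; both are valid.
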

\begin{proof}
When $\mathcal{L}_P$ is essentially self-adjoint, we have $\overline{\mathcal{L}}_P=\widehat{\mathcal{L}}_P$, that is, for all $f\in D(\widehat{\mathcal{L}}_P)=\widehat{\mathcal{H}}_P$, there exists $f_n\in \mathcal{H}_P $ such that $f_n\to f,\  \mathcal{L}_P f\to\widehat{\mathcal{L}}_P f$ in $L^2(M;P)$. This implies that $\im\mathcal{L}_P$ is dense in $\im\widehat{\mathcal{L}}_P$, thus dense in $L^2_0(P)$. It also implies that $P(\widehat{\mathcal{L}}_P f)=0$ for all $f\in\widehat{\mathcal{H}}_P$ if $(\mathcal{L}_P,\mathcal{H}_P)$ is a Stein pair.
\end{proof}

\begin{remark} In fact, the converse of theorem \ref{ESAchar} is also true if any of the three statements in theorem \ref{Sur} hold, which can be easily established using \cite[Theorem X.26]{reed1975ii}. However, such a result will not be used in this work, since we can establish stronger results when $\im\widehat{\mathcal{L}}_P$ is surjective.
\end{remark}

If $\mathcal{L}_P$ is essentially self-adjoint on $\mathcal{H}_P$ and $\ker\widehat{\mathcal{L}}_P=\mathbb{R}$,  $(\widehat{\mathcal{L}}_P,\widehat{\mathcal{H}}_P)$ in Theorem \ref{CC} can be replaced with $(\mathcal{L}_P,\mathcal{H}_P)$. 
It should be noted that essential self-adjointness property being satisfied is highly dependent on the specific structure of $M$. Following theorem provides a non-trivial case in point.

\begin{theorem}\label{ESAcpl} Suppose $M$ is complete without boundary and $e^{-\phi}\in C^L_{\loc}(M)$, then $\mathcal{L}_P$ is essentially self-adjoint on $C^\infty_c(M)$.
\end{theorem}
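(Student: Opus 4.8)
The plan is to prove essential self-adjointness through the deficiency criterion for semibounded symmetric operators. On $C^\infty_c(M)$ the operator $\mathcal{L}_P$ is symmetric and negative (the extendability identity $P(\mathcal{L}_P f,h)=-P(\nabla f,\nabla h)$, already verified earlier), hence $\mathcal{L}_P\le 0$, and so $\mathcal{L}_P$ is essentially self-adjoint if and only if $\im(\mathcal{L}_P-1)$ is dense in $L^2(P)$ \cite[\S X.3]{reed1975ii} — equivalently, since $\big(\im(\mathcal{L}_P-1)\big)^\perp=\ker\big((\mathcal{L}_P-1)^*\big)=\ker(\mathcal{L}_P^*-1)$ by Proposition~\ref{UnbOpe}, if and only if $\ker(\mathcal{L}_P^*-1)=\{0\}$; only this single spectral value needs to be checked because $\mathcal{L}_P$ is bounded above. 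So I would fix $u\in L^2(P)$ with $\mathcal{L}_P^*u=u$, i.e.
$$P\big(u,\mathcal{L}_P f\big)=P(u,f)\qquad\text{for all }f\in C^\infty_c(M),$$
and show that $u=0$.

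First I would upgrade the regularity of $u$. In divergence form the relation above says that $u$ solves $\Div(e^{-\phi}\nabla u)=e^{-\phi}u$ distributionally on $M$. Since $e^{-\phi}\in C^L_{\loc}(M)$ is strictly positive, on every precompact open subset it is bounded above and below by positive constants, so this is a uniformly elliptic divergence-form equation with locally Lipschitz (in particular locally bounded) coefficient and right-hand side in $L^2_{\loc}$; interior elliptic estimates (Caccioppoli) then give $u\in H^1_{\loc}(M)$, after which integration by parts against compactly supported Lipschitz test functions is legitimate and upgrades the relation to
$$P(\nabla u,\nabla f)=-P(u,f)\qquad\text{for every compactly supported }f\in H^1(P).$$
(By Proposition~\ref{LisSob}, $e^{-\phi}\in W^{p,1}_{\loc}(M)$, consistent with the standing assumptions that give meaning to $\nabla\phi$ and to $\mathcal{L}_P f$ for $f\in C^\infty_c(M)$.)

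The core step is a Gaffney-type cutoff estimate, the one place where completeness is used. Fixing $o\in M$, the Hopf--Rinow theorem makes all closed metric balls compact, so $\chi_R:=\eta\big(d(\cdot,o)/R\big)$ — with $\eta:[0,\infty)\to[0,1]$ a fixed non-increasing Lipschitz function that equals $1$ on $[0,1]$ and $0$ on $[2,\infty)$ — is compactly supported and Lipschitz, with $|\nabla\chi_R|\le C/R$ almost everywhere and $\chi_R\uparrow 1$ pointwise as $R\to\infty$. I would test the weak equation with $f=\chi_R^2 u\in H^1(P)$; since $\nabla(\chi_R^2 u)=\chi_R^2\nabla u+2\chi_R u\,\nabla\chi_R$, this gives
$$P\big(\chi_R^2|\nabla u|^2\big)+P\big(\chi_R^2 u^2\big)=-2\,P\big(\chi_R u\,g(\nabla u,\nabla\chi_R)\big).$$
Bounding the right-hand side with Young's inequality by $\tfrac12 P\big(\chi_R^2|\nabla u|^2\big)+2P\big(u^2|\nabla\chi_R|^2\big)$ and absorbing the first term, I get $P\big(\chi_R^2 u^2\big)\le 2P\big(u^2|\nabla\chi_R|^2\big)\le \frac{2C^2}{R^2}\Vert u\Vert_{L^2(P)}^2\to 0$ as $R\to\infty$. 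Since $\chi_R^2 u^2\uparrow u^2$, monotone convergence forces $P(u^2)=0$, i.e.\ $u=0$ in $L^2(P)$, which is the required conclusion $\ker(\mathcal{L}_P^*-1)=\{0\}$.

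I expect the main obstacle to be the regularity upgrade $u\in H^1_{\loc}$, together with the attendant justification that the weak identity persists when tested against the merely Lipschitz, compactly supported function $\chi_R^2 u$ — that is, obtaining enough local control of $u$ and $\nabla u$ despite $\phi$ being only locally Lipschitz. Once that is in hand, the cutoff estimate itself is a routine application of Young's inequality and monotone convergence, and completeness enters only to supply the exhausting family $\chi_R$ with $\chi_R\uparrow 1$.
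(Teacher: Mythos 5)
Your proposal follows essentially the same route as the paper's proof: both reduce essential self-adjointness of the semibounded symmetric operator to showing $\ker(\mathcal{L}_P^*- I)=\{0\}$, upgrade the regularity of a putative eigenfunction $u$ by interior elliptic theory, and then kill $u$ with a Gaffney-type cutoff argument in which completeness enters exactly as you describe (the paper's $\phi_n(x)=\phi(n^{-1}d(x,o))$ is your $\chi_R$). The one substantive caveat is the regularity step: Caccioppoli's inequality presupposes $u\in H^1_{\loc}$ and cannot by itself promote an $L^2$ \emph{very weak} solution (tested only against $\mathcal{L}_P f$ for smooth compactly supported $f$) to $H^1_{\loc}$; the paper instead cites a regularity theorem for very weak solutions of divergence-form equations with Lipschitz coefficients, obtaining even $u\in H^2_{\loc}$, and you would need such a result (or a mollification/duality argument) at exactly the point you flag as the main obstacle. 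Your subsequent bookkeeping -- bounding $P(\chi_R^2u^2)$ directly via Young's inequality rather than first showing $\nabla u=0$ as the paper does -- is a harmless and slightly cleaner variant.
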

\begin{proof}
Please see Appendix \ref{PrfESAcpl}.
\end{proof}

Under the assumption of theorem \ref{ESAcpl}, we have an application in this case.

 \begin{corollary}\label{ESAapp} Under the assumptions made in theorem \ref{ESAcpl}, for $M$-valued r.v. $X_n$ with $Q_{X_n}\ll P$ and $\sup\Vert\frac{d Q_{X_n}}{d P}\Vert_{L^2(P)}<+\infty$,
$$ Q_{X_n}\xrightarrow{w.} P \iff  E[\mathcal{L}_Pf(X_n)]\to 0,\ \forall f\in C^\infty_c(M). $$
\end{corollary}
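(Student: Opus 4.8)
The plan is to reduce the statement to Theorem \ref{CC}(C2) applied with the concrete pair $(\mathcal{L}_P,C^\infty_c(M))$ in place of the abstract Friedrichs pair $(\widehat{\mathcal{L}}_P,\widehat{\mathcal{H}}_P)$, and then to translate the resulting equivalence, which is phrased in terms of weak $L^2(P)$-convergence of Radon--Nikodym derivatives, into ordinary weak convergence of the measures $Q_{X_n}$.

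First I would verify that $\mathcal{L}_P$ is extendable on $C^\infty_c(M)$: any $f\in C^\infty_c(M)$ is smooth with compact support and $P$ is finite, so $C^\infty_c(M)\subset H^2(P)$, and $C^\infty_c(M)$ is dense in $L^2(P)$; the identity $P(\mathcal{L}_P f,h)=-P(\nabla f,\nabla h)$ for $f,h\in C^\infty_c(M)$ follows exactly as in the discussion preceding the Friedrichs extension theorem (and the proof of the Proposition on $C^2_c(M)$), by approximating $e^{-\phi}\in H^1_{\loc}(M)$ by smooth functions on a relatively compact domain containing the supports and applying the Divergence theorem together with property \eqref{DIVfml}. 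That same Proposition ($P(\mathcal{L}_P f)=0$ for $f\in C^2_c(M)$) shows that $(\mathcal{L}_P,C^\infty_c(M))$ is a Stein pair, and Theorem \ref{ESAcpl} -- whose hypotheses ($M$ complete without boundary, $e^{-\phi}\in C^L_{\loc}(M)$) are exactly those of the corollary -- gives that $\mathcal{L}_P$ is essentially self-adjoint on $C^\infty_c(M)$.

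Next I would establish $\ker\widehat{\mathcal{L}}_P=\mathbb{R}$. For the inclusion $\supset$: since $(\mathcal{L}_P,C^\infty_c(M))$ is a Stein pair and $\mathcal{L}_P$ is essentially self-adjoint, Theorem \ref{ESAchar} makes $(\widehat{\mathcal{L}}_P,\widehat{\mathcal{H}}_P)$ a Stein pair (even though $1\notin C^\infty_c(M)$), so $\ker\widehat{\mathcal{L}}_P\supset\mathbb{R}$ by Proposition \ref{SPchar}(1). For the inclusion $\subset$: $e^{-\phi}$ is locally Lipschitz and strictly positive, hence bounded below by a positive constant on every compact set, so $e^{\phi}$ is locally bounded and in particular $e^{\phi}\in L^1_{\loc}(M)$; Theorem \ref{Kerconst} then applies. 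With essential self-adjointness and $\ker\widehat{\mathcal{L}}_P=\mathbb{R}$ in hand, the discussion immediately following Theorem \ref{ESAchar} permits replacing $(\widehat{\mathcal{L}}_P,\widehat{\mathcal{H}}_P)$ by $(\mathcal{L}_P,C^\infty_c(M))$ in Theorem \ref{CC}(C2): under the corollary's assumptions $Q_{X_n}\ll P$ and $\sup\Vert\frac{d Q_{X_n}}{d P}\Vert_{L^2(P)}<+\infty$,
$$ \frac{d Q_{X_n}}{d P}\xrightarrow{w.}1\ \text{in } L^2(P)\iff E[\mathcal{L}_P f(X_n)]\to 0,\ \forall f\in C^\infty_c(M). $$

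It then remains to reconcile the two notions of convergence. Weak $L^2(P)$-convergence of $\frac{d Q_{X_n}}{d P}$ to $1$ implies $Q_{X_n}\xrightarrow{w.}P$ by the remark after Theorem \ref{CC} (test against $f\in C_b(M)\subset L^2(P)$). Conversely, suppose $Q_{X_n}\xrightarrow{w.}P$ and put $\rho_n=\frac{d Q_{X_n}}{d P}$; the sequence $\{\rho_n\}$ is bounded in the Hilbert space $L^2(P)$, so by reflexivity every subsequence has a further subsequence converging weakly, say $\rho_{n_k}\xrightarrow{w.}\rho$; testing against any $f\in C_b(M)$ gives $P(\rho f)=\lim_k Q_{X_{n_k}}(f)=P(f)$, and since $C^\infty_c(M)\subset C_b(M)$ is dense in $L^2(P)$ this forces $\rho=1$; hence the whole sequence converges weakly to $1$. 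Combining this equivalence with the displayed one yields the corollary. The argument is mostly bookkeeping -- lining up the corollary's hypotheses with those of Theorems \ref{ESAcpl}, \ref{ESAchar}, \ref{Kerconst} and Theorem \ref{CC}(C2) -- and the only genuinely non-formal step is this last equivalence, where the uniform $L^2(P)$ bound is essential both to invoke (C2) and to extract weakly convergent subsequences, and density of $C_b(M)$ in $L^2(P)$ is used to pin down the limit.
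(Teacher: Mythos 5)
Your proposal is correct and follows essentially the same route as the paper, whose entire proof is a one-line citation of Theorem \ref{CC}(C2); you simply make explicit the bookkeeping (extendability of $\mathcal{L}_P$ on $C^\infty_c(M)$, essential self-adjointness from Theorem \ref{ESAcpl}, $\ker\widehat{\mathcal{L}}_P=\mathbb{R}$ via Theorem \ref{Kerconst} and Proposition \ref{SPchar}, and the substitution of $(\mathcal{L}_P,C^\infty_c(M))$ for the extended pair) that the paper leaves implicit. Your final subsequence argument showing that $Q_{X_n}\xrightarrow{w.}P$ together with the uniform $L^2(P)$ bound forces $\frac{dQ_{X_n}}{dP}\xrightarrow{w.}1$ weakly in $L^2(P)$ is a genuinely needed step that the paper's remark only states in the opposite direction, so including it is a worthwhile addition rather than a deviation.
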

\begin{proof} Directly applies the (C2) in theorem \ref{CC}.
\end{proof}

 We would like to emphasize that 
the conditions we impose on $P$ are weaker than those in related published works \cite{barp2018riemann,le2020diffusion,thompson2020,xu2021interpretable}, in that it is only \emph{locally Lipschitz continuous}, which allows us to analyze the intrinsic distributions, as illustrated in the following example.
This is a significant distinction that merits attention as it motivates our method of Stein pair extension on Riemannian manifolds.

\begin{example}\label{Intrinsic} Consider the family of distributions  $\frac{d P}{d v}\propto \exp(-d(x,\mu)^\alpha)$ for some $\alpha\geq 1$ and $\mu\in M$, where $M$ is a complete Riemannian without boundary. For the case $\alpha=2$, it is the intrinsic Normal law \cite{pennec2006intrinsic} deduced through maximizing the entropy under specific conditions on the second order moment. Such function $\exp(-d(x,\mu)^\alpha)$ is clearly locally Lipschitz continuous. Therefore, corollary \ref{ESAapp} applies to this case.
\end{example}

\subsubsection{Weighted Poincaré Inequality} \label{WPI}

In this section, we will first establish the if and only if conditions for surjectivity of $\widehat{\mathcal{L}}_P$, which facilitates testing for surjectivity. Following this, we will use the surjectivity of $\widehat{\mathcal{L}}_P$ to establish improved scope of characterization of the Stein pair. 

\begin{theorem}\label{Sur} Suppose $\mathcal{L}_P$ is extendable on $\mathcal{H}_P$ and $\ker\widehat{\mathcal{L}}_P=\mathbb{R}$, then following statements are equivalent:
\begin{enumerate}
    \item $\im\widehat{\mathcal{L}}_P=L^2_0(P)$.
    \item $\exists C_1>0$ s.t. $\Vert f-P(f)\Vert_{L^2(P)}\leq C_1\Vert\widehat{\mathcal{L}}_P f\Vert_{L^2(P)} $, for all $f\in\widehat{\mathcal{H}}_P$.
    \item $\exists C_2>0$ s.t. $\Vert f-P(f)\Vert_{L^2(P)}\leq C_2 \Vert \nabla f\Vert_{L^2(P)} $, for all $f\in\mathcal{H}_P$.
\end{enumerate}
\end{theorem}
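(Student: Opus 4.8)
Since $\mathcal{L}_P$ is extendable on $\mathcal{H}_P$ and $\ker\widehat{\mathcal{L}}_P=\mathbb{R}$, Proposition \ref{SPchar} already guarantees that $\im\widehat{\mathcal{L}}_P$ is dense in $L^2_0(P)$, so the content of the theorem is that each of the three conditions forces this dense image to be closed, i.e.\ all of $L^2_0(P)$; equivalently, each says that $-\widehat{\mathcal{L}}_P$, restricted to the reducing subspace $L^2_0(P)$, is bounded below. My plan is to establish the equivalences through $(3)\Rightarrow(2)$, $(2)\Leftrightarrow(1)$, and $(1)\Rightarrow(3)$. Throughout I will write $g:=f-P(f)\in L^2_0(P)$ for $f\in\widehat{\mathcal{H}}_P$, and use the standing facts that $\mathcal{H}_P\subset\widehat{\mathcal{H}}_P$ with $\widehat{\mathcal{L}}_P|_{\mathcal{H}_P}=\mathcal{L}_P$, that $P(\widehat{\mathcal{L}}_P f)=0$ since $\im\widehat{\mathcal{L}}_P\perp\ker\widehat{\mathcal{L}}_P=\mathbb{R}$, and the Friedrichs form identity \eqref{FriPro}, which for $h=f$ reads $\Vert\nabla f\Vert_{L^2(P)}^2=-P(\widehat{\mathcal{L}}_P f,f)$.

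\textbf{$(3)\Rightarrow(2)$ and $(2)\Leftrightarrow(1)$.} First I would upgrade the weighted Poincar\'e inequality (3) from $\mathcal{H}_P$ to all of $\widehat{\mathcal{H}}_P$: since $\widehat{\mathcal{H}}_P\subset\mathscr{H}_P$, each $f\in\widehat{\mathcal{H}}_P$ is an $H^1(P)$-limit of functions $f_n\in\mathcal{H}_P$, whence $P(f_n)\to P(f)$ and $\nabla f_n\to\nabla f$ in $L^2(P)$, and (3) passes to the limit. Then, using $\widehat{\mathcal{L}}_P f\in L^2_0(P)$ (so $P(\widehat{\mathcal{L}}_P f,f)=P(\widehat{\mathcal{L}}_P f,g)$) and Cauchy--Schwarz,
\[\Vert g\Vert_{L^2(P)}^2\le C_2^2\Vert\nabla f\Vert_{L^2(P)}^2=-C_2^2\,P(\widehat{\mathcal{L}}_P f,g)\le C_2^2\,\Vert\widehat{\mathcal{L}}_P f\Vert_{L^2(P)}\,\Vert g\Vert_{L^2(P)},\]
which after dividing is (2) with $C_1=C_2^2$. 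For $(2)\Leftrightarrow(1)$: (2) states that $\widehat{\mathcal{L}}_P$ is bounded below on $\widehat{\mathcal{H}}_P\cap L^2_0(P)$, so preimages of Cauchy sequences are Cauchy and, since $\widehat{\mathcal{L}}_P$ is closed, $\im\widehat{\mathcal{L}}_P$ is closed; together with the density above this gives (1). Conversely, under (1) the map $\widehat{\mathcal{L}}_P\colon\widehat{\mathcal{H}}_P\cap L^2_0(P)\to L^2_0(P)$ is a closed bijection (injective because $\ker\widehat{\mathcal{L}}_P=\mathbb{R}$), so its everywhere-defined inverse is closed, hence bounded by the closed graph theorem, and that bound is precisely (2).

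\textbf{$(1)\Rightarrow(3)$.} Writing $A:=-\widehat{\mathcal{L}}_P$ restricted to the reducing subspace $L^2_0(P)$, $A$ is positive self-adjoint there, and by (1) bijective, so $A^{-1}$ is a bounded (closed graph theorem) positive self-adjoint operator on $L^2_0(P)$. For $f\in\widehat{\mathcal{H}}_P$, setting $w:=Ag=-\widehat{\mathcal{L}}_P f$, the form identity gives $\Vert\nabla f\Vert_{L^2(P)}^2=\langle Ag,g\rangle_{L^2(P)}=\langle A^{-1}w,w\rangle_{L^2(P)}$, while the elementary bound $\Vert A^{-1}w\Vert^2\le\Vert A^{-1}\Vert\,\langle A^{-1}w,w\rangle$ for a bounded positive self-adjoint operator (via its positive square root, or the spectral theorem) yields $\Vert g\Vert_{L^2(P)}^2=\Vert A^{-1}w\Vert_{L^2(P)}^2\le\Vert A^{-1}\Vert\,\Vert\nabla f\Vert_{L^2(P)}^2$. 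This is (3) on $\widehat{\mathcal{H}}_P\supset\mathcal{H}_P$ with $C_2=\Vert A^{-1}\Vert^{1/2}$, closing the cycle.

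\textbf{Main obstacle.} The subtle implication is $(1)\Rightarrow(3)$: turning the algebraic surjectivity of $\widehat{\mathcal{L}}_P$ into a quantitative inequality requires (i) recognizing $L^2_0(P)$ as a reducing subspace so that the restriction of $\widehat{\mathcal{L}}_P$ is again self-adjoint and bijective --- where $\ker\widehat{\mathcal{L}}_P=\mathbb{R}$ enters --- (ii) the closed graph theorem to upgrade invertibility to a \emph{bounded} inverse, and (iii) the operator inequality controlling $\Vert A^{-1}w\Vert^2$ by $\langle A^{-1}w,w\rangle$. A more uniform alternative would be to note, via the spectral theorem for the self-adjoint operator $-\widehat{\mathcal{L}}_P|_{L^2_0(P)}$, that each of (1), (2), (3) is simply the assertion $\inf\sigma\!\big(-\widehat{\mathcal{L}}_P|_{L^2_0(P)}\big)>0$; then the equivalences are immediate and the only remaining care is the $H^1(P)$-density argument matching condition (3) on $\mathcal{H}_P$ with the quadratic form of $-\widehat{\mathcal{L}}_P$ on $\widehat{\mathcal{H}}_P$.
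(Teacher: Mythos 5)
Your proof is correct. For $(3)\Rightarrow(2)$ and $(1)\Leftrightarrow(2)$ it runs along essentially the same lines as the paper's: the upgrade of the Poincar\'e inequality from $\mathcal{H}_P$ to $\widehat{\mathcal{H}}_P$ by $H^1(P)$-density followed by Cauchy--Schwarz, and the closed-graph/open-mapping argument on the centered domain $\widehat{\mathcal{H}}_{P,0}$ (your density justification for the upgrade is actually more explicit than the paper's, which merely asserts that the inequality extends). The genuine divergence is in recovering the weighted Poincar\'e inequality from surjectivity. The paper proves $\mathbf{1,2\Rightarrow 3}$ by invoking the spectral theorem in multiplication-operator form: it represents $\widehat{\mathcal{L}}_P$ restricted to $L^2_0(P)$ as multiplication by a real function on some $L^2(\mathcal{S},\mu)$ and then shows, via two contradiction arguments with indicator functions, that this multiplier is bounded above by $-C^{-1}$ almost everywhere. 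You instead observe that $A:=-\widehat{\mathcal{L}}_P|_{L^2_0(P)}$ is a positive self-adjoint bijection on the reducing subspace $L^2_0(P)$, so $A^{-1}$ is bounded, positive and self-adjoint, and then combine the elementary square-root inequality $\Vert A^{-1}w\Vert^2\le\Vert A^{-1}\Vert\,\langle A^{-1}w,w\rangle$ with the form identity $\Vert\nabla f\Vert^2_{L^2(P)}=\langle Ag,g\rangle$. Both arguments are sound; yours is shorter, avoids the explicit multiplication-operator representation and its measure-theoretic case analysis, and makes transparent the conceptual point (which you state at the end) that all three conditions are equivalent to $\inf\sigma\bigl(-\widehat{\mathcal{L}}_P|_{L^2_0(P)}\bigr)>0$, i.e.\ to $0$ being isolated in the spectrum --- exactly the remark the paper makes informally after the theorem but does not exploit in its proof.
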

\begin{proof} Please see Appendix \ref{PrfSur}.
\end{proof}

The inequality in the third statement of theorem \ref{Sur} is known as the \emph{weighted Poincaré inequality} (WPI). The smallest feasible value of constant $C_2$ is exactly the first non-zero eigenvalue of $\widehat{\mathcal{L}}_P$ as a self-adjoint operator. Therefore, the WPI holds if and only if $0$ is isolated in the spectrum of $\widehat{\mathcal{L}}_P$. The existing literature \cite{bandara2018eigenvalue,colbois2013eigenvalues,du2021estimates,setti1998eigenvalue,wang2012eigenvalue,wang2016lower} on this inequality and the estimation of the first non-zero eigenvalue is rather immense and entangled with a variety of fields of Science and Engineering. Since our work does not rely on the specific estimation technique used for the estimation of $C_2$ -- which involves enormous effort to summarize here  and is out of the scope of this work --  we will simply focus on several well known cases (for subsequent analysis) where WPI holds.

\begin{proposition}\label{WPIcase} The weighted Poincaré inequality holds on $H^1(P)$ if
\begin{enumerate}[(C1)]
    \item $M=\mathbb{R}^n$ and $\phi$ is convex. 
    \item $M$ is complete without boundary and $\ric+\text{\em Hess}(\phi)\geq \kappa g$ for some $\kappa>0$.
    \item $M$ is compact, $\phi$ is continuous on $M$.
\end{enumerate}
\end{proposition}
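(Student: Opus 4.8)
The plan is to establish, in each of the three cases separately, the inequality $\Vert f-P(f)\Vert_{L^2(P)}\le C\Vert\nabla f\Vert_{L^2(P)}$, which by Theorem~\ref{Sur}(3) is precisely the weighted Poincar\'e inequality. In every case it suffices to argue for $f$ in a dense subclass of $H^1(P)$ and then pass to the limit, since $f\mapsto f-P(f)$ (valued in $L^2(P)$) and $f\mapsto\Vert\nabla f\Vert_{L^2(P)}$ are both continuous in the $H^1(P)$-norm; for compact $M$ the smooth functions already form such a subclass, while on $\mathbb{R}^n$ (or on any complete $M$) one may restrict further to $C^\infty_c(M)$, which is dense because $P$ is finite -- truncate a general element by cutoffs $\chi_R$ with $|\nabla\chi_R|\le C/R$ and use dominated convergence.

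Case (C3) is the quickest. If $M$ is compact and $\phi$ is continuous then $\phi$ is bounded, so $e^{-\phi}$ lies between two positive constants and, writing $dP=Z^{-1}e^{-\phi}\,dv$, one has $P\asymp v$ with an explicit comparison constant. For smooth $f$,
\[
\Vert f-P(f)\Vert_{L^2(P)}\le\Vert f-(f)_M\Vert_{L^2(P)}\le\big(Z^{-1}\sup e^{-\phi}\big)^{1/2}\,\Vert f-(f)_M\Vert_{L^2(v)},
\]
the first step because $P(f)$ is the $L^2(P)$-projection of $f$ onto the constants. I would then invoke the Poincar\'e--Wirtinger inequality $\Vert f-(f)_M\Vert_{L^2(v)}\le C\Vert\nabla f\Vert_{L^2(v)}$ on the compact manifold $M$ (a homogeneous strengthening of Proposition~\ref{PI}, obtained from the compact embedding $W^{2,1}(M)\hookrightarrow L^2(M)$ by a contradiction argument and valid in every dimension), and finally compare $\Vert\nabla f\Vert_{L^2(v)}$ with $\Vert\nabla f\Vert_{L^2(P)}$ in the reverse direction, which yields $C_2=C(\sup e^{-\phi}/\inf e^{-\phi})^{1/2}$.

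Case (C2) is the Bakry--\'Emery criterion. The condition $\ric+\hess\phi\ge\kappa g$ presupposes $\phi\in C^2$, and the Bochner--Weitzenb\"ock identity for the weighted Laplacian $\mathcal{L}_P$ then gives, with $\Gamma(f)=|\nabla f|^2$,
\[
\Gamma_2(f)=|\hess f|^2+\big(\ric+\hess\phi\big)(\nabla f,\nabla f)\ge\kappa\,|\nabla f|^2=\kappa\,\Gamma(f),
\]
i.e.\ the curvature--dimension condition $\mathrm{CD}(\kappa,\infty)$ holds. Completeness of $M$, together with the essential self-adjointness of $\mathcal{L}_P$ on $C^\infty_c(M)$ furnished by Theorem~\ref{ESAcpl} (applicable since $e^{-\phi}\in C^2\subset C^L_{\loc}(M)$), makes the associated diffusion semigroup well defined and conservative, so the classical semigroup argument turns $\mathrm{CD}(\kappa,\infty)$ into the spectral-gap estimate $\mathrm{Var}_P(f)\le\kappa^{-1}\int_M|\nabla f|^2\,dP$; hence the WPI on $H^1(P)$ holds with $C_2=\kappa^{-1/2}$.

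Case (C1): on $M=\mathbb{R}^n$ with $\phi$ convex and $e^{-\phi}\in L^1(\mathbb{R}^n)$, the measure $P$ is log-concave. Here the $\Gamma_2$-computation of Case (C2) degenerates -- convexity only gives $\hess\phi\ge 0$, so $\ric+\hess\phi\ge 0$, i.e.\ $\mathrm{CD}(0,\infty)$, which is too weak to produce a spectral gap -- so I would instead invoke the theorem that every log-concave probability measure on $\mathbb{R}^n$ satisfies a Poincar\'e inequality with a finite constant (Bobkov, or via the Kannan--Lov\'asz--Simonovits localisation), and combine it with the density reduction above. I expect this last case to be the genuine obstacle: unlike (C2) and (C3), it cannot be handled by curvature/semigroup bookkeeping or by comparison with the unweighted Poincar\'e inequality, and it rests on the nontrivial fact that log-concavity by itself forces a spectral gap; one must also verify that the normalising constant and the passage from the core $C^\infty_c(\mathbb{R}^n)$ to all of $H^1(P)$ cause no harm.
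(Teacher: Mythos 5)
Your proposal is correct and follows essentially the same route as the paper, which simply cites Bobkov's log-concave Poincar\'e inequality for (C1), a Bakry--\'Emery-type eigenvalue estimate for (C2), and derives (C3) from the compact-manifold Poincar\'e inequality via $P\asymp v$. Your only substantive addition is the (warranted) observation that (C3) needs the homogeneous Poincar\'e--Wirtinger form with $\Vert\nabla f\Vert_{L^p}$ alone on the right and without the restriction $p<n$ appearing in Proposition~\ref{PI}, a point the paper glosses over.
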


\begin{proof} See  \cite{bobkov1999isoperimetric} for (C1) and \cite{cheng2017eigenvalues} for (C2), and (C3) is straightforward from the \nameref{PI} since $P\asymp v$ in this case.
\end{proof}

When WPI holds on $\mathcal{H}_P$, $\im\widehat{\mathcal{L}}_P$ will be able to capture all functions in $L^2_0(P)$, which is more than enough to characterize weak convergence. However, as mentioned previously, elements in $\im\widehat{\mathcal{L}}_P$ are defined weakly, instead of pointwise. Therefore, even when the WPI holds on $\mathcal{H}_P$, the pair $(\widehat{\mathcal{L}}_P,\widehat{\mathcal{H}}_P)$ still only applies to distributions absolutely continuous w.r.t $P$. In other words, $\im\widehat{\mathcal{L}}_P$ is excessively large, among which some functions are not appropriate for the characterization of weak convergence of arbitrary distributions.

Recall that it is $C_b(M)$ that is used to characterize the weak convergence. A direct argument involving the partition of unity and Stone–Weierstrass theorem \cite[Theorem IV.9]{reed1972methods} would illustrate that $C^\infty_b(M)$ is dense in $C_b(M)$ under the uniform norm. Therefore, to drop the condition $Q_X\ll P$, we need a function class $\mathcal{C}\subset C^2(M)$ on which $\mathcal{L}_P$ is defined strongly and $C^\infty_b(M)\subset \im\mathcal{L}_P|_\mathcal{C}\subset C_b(M)$.

To establish relevant results, we invoke a proposition first.

\begin{proposition}\label{RglPDE} Suppose $\mathcal{L}_P$ is extendable on $\mathcal{H}_P$, $\ker\widehat{\mathcal{L}}_P=\mathbb{R}$, the WPI holds on $\mathcal{H}_P$ and $e^{-\phi}\in C^{\lfloor \frac{n}{2}\rfloor+k+3}$ for some $k\geq 0$. Then for each $h\in C^\infty_b(M)$ with $P(h)=0$, there exists $f_h\in \widehat{\mathcal{H}}_P\cap C^{k+2}(M)$ such that $\mathcal{L}_P f=h$. 
\end{proposition}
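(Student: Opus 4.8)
The plan is to produce $f_h$ first as a weak (Hilbert-space) solution of $\widehat{\mathcal{L}}_P f_h = h$ using the surjectivity of $\widehat{\mathcal{L}}_P$, and then to promote it to a classical solution of the stated regularity by interior elliptic regularity applied to the equation $\mathcal{L}_P f_h = h$.

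First I would obtain the weak solution. Since $h\in C^\infty_b(M)$ is bounded and $P$ is a probability measure, $h\in L^2(P)$, and $P(h)=0$ puts $h\in L^2_0(P)$. The hypotheses---the WPI on $\mathcal{H}_P$ together with $\ker\widehat{\mathcal{L}}_P=\mathbb{R}$---give, via the equivalence of statements (1) and (3) in Theorem~\ref{Sur}, that $\widehat{\mathcal{L}}_P$ is onto $L^2_0(P)$, so there is $f_h\in\widehat{\mathcal{H}}_P$ with $\widehat{\mathcal{L}}_P f_h=h$. Next I would recast this abstractly defined solution as a weak solution of a PDE: by the defining identity \eqref{DefofOpe} of the Friedrichs extension (equivalently \eqref{FriPro}), for all $\psi$ in the form domain $\mathscr{H}_P$ one has $P(\nabla f_h,\nabla\psi)=-P(h,\psi)$, i.e.\ $\int_M g(\nabla f_h,\nabla\psi)\,e^{-\phi}\,dv=-\int_M h\,\psi\,e^{-\phi}\,dv$. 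This is precisely the weak (divergence) form of $\Div(e^{-\phi}\nabla f_h)=h\,e^{-\phi}$, i.e.\ $\mathcal{L}_P f_h=\Delta f_h-g(\nabla\phi,\nabla f_h)=h$: a second-order linear elliptic equation whose principal part $\Delta$ has smooth coefficients coming from $g$, whose drift $\nabla\phi=-e^{\phi}\nabla(e^{-\phi})$ lies in $C^{\lfloor n/2\rfloor+k+2}$ (it loses one derivative relative to $e^{-\phi}$, and $e^{\phi}=(e^{-\phi})^{-1}$ is as smooth as $e^{-\phi}$ since $e^{-\phi}>0$), with no zeroth-order term and smooth right-hand side $h$.

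Then I would run an elliptic bootstrap. Since $e^{-\phi}$ is continuous, $P$ and $v$ are locally comparable, so $f_h\in\mathscr{H}_P\subset H^1_{\loc}(M)$; after checking that the functions in $C^\infty_c(M)$ supported in small coordinate balls lie in $\mathscr{H}_P$ (so that the weak identity above localizes to the usual distributional formulation of $\mathcal{L}_P f_h=h$ on such balls), standard interior elliptic regularity for divergence-form equations (iterated difference-quotient / bootstrap estimates, or the Sobolev-space regularity theorem) applies: the coefficient matrix is built from $e^{-\phi}$ and the smooth metric coefficients, hence lies in $C^{\lfloor n/2\rfloor+k+3}$, as does the right-hand side, and each further derivative of the coefficients and the datum buys two more weak derivatives of $f_h$, so $f_h\in H^{s}_{\loc}(M)$ with $s\ge\lfloor n/2\rfloor+k+3$. (When $\partial M\ne\emptyset$, the Dirichlet condition built into $\mathcal{H}_P$ and the smoothness of $\partial M$ let one add regularity up to the boundary in the same way.) Finally, the Sobolev embedding theorem applied in charts gives $f_h\in C^{k+2}(M)$, since $\lfloor n/2\rfloor+k+3>(k+2)+n/2$ for every $n$. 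Once $f_h$ is $C^2$ and $\nabla\phi$ is continuous, $\mathcal{L}_P f_h$ is defined classically and integration by parts against test functions identifies it with $\widehat{\mathcal{L}}_P f_h=h$; hence $f_h\in\widehat{\mathcal{H}}_P\cap C^{k+2}(M)$ with $\mathcal{L}_P f_h=h$, as required.

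The main obstacle is the derivative bookkeeping in the bootstrap---the one-derivative loss in passing from $e^{-\phi}$ to $\nabla\phi$, the two-derivative gain per step capped by the finite smoothness of the coefficients, and the $n/2$-loss in the final embedding must combine so that exactly $\lfloor n/2\rfloor+k+3$ derivatives on $e^{-\phi}$ suffice---together with the (routine but not entirely trivial) verification that the abstract Friedrichs weak identity localizes to a genuine PDE weak solution testable against $C^\infty_c$ functions.
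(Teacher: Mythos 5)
Your proposal is correct and is essentially the paper's own argument, which the authors compress into a one-line citation of elliptic regularity and Sobolev embedding: weak solvability of $\widehat{\mathcal{L}}_P f_h=h$ from surjectivity (Theorem \ref{Sur}), localization of the Friedrichs weak identity to a divergence-form weak solution, an interior bootstrap, and the embedding, with the derivative count $e^{-\phi}\in C^{\lfloor n/2\rfloor+k+3}\Rightarrow f_h\in H^{\lfloor n/2\rfloor+k+4}_{\loc}\hookrightarrow C^{k+2}$ checking out. The only slip is your parenthetical on $\partial M\neq\emptyset$: the boundary condition encoded in the Friedrichs extension in the paper's compact-with-boundary setting is the natural (Neumann) condition $g(\vec{n},\nabla f)=0$ (cf.\ the proof of Theorem \ref{SCC}, case (C3)), not Dirichlet, though this does not affect the interior regularity argument.
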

\begin{proof} This is straightforward by the regularity results of the solutions to elliptic equations \cite[\S 6.3]{evans2022partial} and Sobolev embedding theorem \cite[Theorem 4.12]{adams2003sobolev}.
\end{proof}

As mentioned previously, the function class $\widehat{\mathcal{H}}_P$ as well as  $\widehat{\mathcal{H}}_P\cap C^{k+2}(M)$ could be highly implicit. Therefore, we could adopt different function classes as the manifold $M$ varies. The following theorem \ref{SCC} captures the characterization scope of the Stein pair under different regularity conditions imposed on $M$.

\begin{theorem}\label{SCC} Under the assumptions of Proposition \ref{RglPDE}, for a sequence of $M$-valued r.v.s $X_n$, not necessarily with $Q_{X_n}\ll P$, 
$$ Q_{X_n}\xrightarrow{w.} P \iff E[\mathcal{L}_P f(X_n)]\to 0,\ \forall f\in \mathcal{D}^k(M;P). $$
The function class $\mathcal{D}^k_P(M)$ can be prescribed as follows:
\begin{enumerate}[(C1)]

\item For a general incomplete $M$,
    $$ \mathcal{D}^k_P(M):=\left\{f\in C^{k+2}(M)\cap L^2(P): \mathcal{L}_P f\in C_b(M), P(\mathcal{L}_P f)=0 \right\}.$$

\item If $M$ is complete without boundary,
       $$ \mathcal{D}^k_P(M):=\left\{f\in C^{k+2}(M)\cap L^2(P): \mathcal{L}_P f\in C_b(M) \right\}. $$

\item If $M$ is compact with boundary and $\vec{n}$ is the normal vector field of $\partial M$, $$\mathcal{D}^k(M):=\left\{f\in C^{k+2}(M): g(\vec{n},\nabla f)=0 \right\}. $$
 If $\partial M=\emptyset$, then, $\mathcal{D}^k(M)=C^{k+2}(M)$. This class is independent of $P$.
\end{enumerate}
\end{theorem}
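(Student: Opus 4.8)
The plan is to prove all three cases simultaneously as consequences of a single ``one-sided'' bound coming from the Friedrichs extension, and then extract the weak-convergence conclusion from the density of $C^\infty_b(M)$ in $C_b(M)$. First I would fix a sequence $X_n$ with $E[\mathcal{L}_P f(X_n)]\to 0$ for all $f\in\mathcal{D}^k_P(M)$ (the forward direction; the reverse direction $Q_{X_n}\xrightarrow{w.}P\Rightarrow E[\mathcal{L}_Pf(X_n)]\to 0$ is immediate since each $\mathcal{L}_Pf$ is bounded continuous, using $P(\mathcal{L}_Pf)=0$ which holds by the definition of $\mathcal{D}^k_P$ in (C1) and by Proposition~\ref{RglPDE} combined with $P(\widehat{\mathcal{L}}_Pf)=0$ in (C2), (C3)). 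The crux is: given any $h\in C^\infty_b(M)$ with $P(h)=0$, apply Proposition~\ref{RglPDE} to get $f_h\in\widehat{\mathcal{H}}_P\cap C^{k+2}(M)$ with $\mathcal{L}_Pf_h=h$; one then checks $f_h\in\mathcal{D}^k_P(M)$ in each of the three cases, so that $E[h(X_n)]=E[\mathcal{L}_Pf_h(X_n)]\to 0=P(h)$. Hence $Q_{X_n}(h)\to P(h)$ for all mean-zero $h\in C^\infty_b(M)$, and since constants are trivially handled and $Q_{X_n}$ are probability measures, $Q_{X_n}(h)\to P(h)$ for \emph{all} $h\in C^\infty_b(M)$; density of $C^\infty_b(M)$ in $C_b(M)$ under the uniform norm (as noted in the text, via partition of unity and Stone--Weierstrass) upgrades this to $Q_{X_n}\xrightarrow{w.}P$.

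The three cases differ only in verifying that the constructed $f_h$ lands in the prescribed class $\mathcal{D}^k_P(M)$, and in which solution of $\mathcal{L}_P f=h$ we are allowed to take. In case (C1) the class is defined exactly to contain any $C^{k+2}$, $L^2(P)$-solution with $\mathcal{L}_Pf\in C_b(M)$ and $P(\mathcal{L}_Pf)=0$, so $f_h$ qualifies by construction — Proposition~\ref{RglPDE} gives $f_h\in\widehat{\mathcal{H}}_P\subset L^2(P)$, $f_h\in C^{k+2}(M)$, $\mathcal{L}_Pf_h=h\in C_b(M)$, and $P(h)=0$. In case (C2), $M$ complete without boundary, essential self-adjointness (Theorem~\ref{ESAcpl}, applicable since $e^{-\phi}\in C^{\lfloor n/2\rfloor+k+3}\subset C^L_{\loc}$) forces $\overline{\mathcal{L}}_P=\widehat{\mathcal{L}}_P$, and the relation $P(\widehat{\mathcal{L}}_Pf)=-P(\nabla f,\nabla 1)=0$ shows $P(h)=0$ automatically for any $f_h\in\widehat{\mathcal{H}}_P$; so the constraint $P(\mathcal{L}_Pf)=0$ can be dropped from the class definition. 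In case (C3), $M$ compact with boundary, the natural domain of the Friedrichs (Neumann) extension consists of functions satisfying the Neumann condition $g(\vec n,\nabla f)=0$; here $P\asymp v$, every $C^{k+2}$ function is automatically in $L^2(P)=H^0(P)$ and $\mathcal{L}_Pf$ is automatically bounded, so the class collapses to all of $C^{k+2}(M)$ subject only to the boundary condition, independent of $P$. The WPI on $\mathcal{H}_P$ (needed to invoke Proposition~\ref{RglPDE}) holds in cases (C2) and (C3) by Proposition~\ref{WPIcase}(C2),(C3); in case (C1) it is part of the standing hypothesis of Proposition~\ref{RglPDE}, which Theorem~\ref{SCC} inherits.

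The main obstacle I anticipate is not the functional-analytic bookkeeping but the \emph{integrability/measurability caveat} flagged in the remarks after Theorem~\ref{CC}: for $E[\mathcal{L}_Pf_h(X_n)]$ to be meaningful we need $\mathcal{L}_Pf_h=h$ bounded — which is guaranteed by choosing $h\in C^\infty_b(M)$ — and we must ensure the reverse-direction claim ``$Q_{X_n}\xrightarrow{w.}P\Rightarrow E[\mathcal{L}_Pf(X_n)]\to 0$'' genuinely holds for \emph{every} $f$ in the (possibly large) class $\mathcal{D}^k_P(M)$, where $\mathcal{L}_Pf$ is assumed bounded continuous but $f$ itself need not be; this is fine because $\mathcal{L}_Pf\in C_b(M)$ is all that weak convergence sees, together with $P(\mathcal{L}_Pf)=0$. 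A secondary subtlety is case (C2): one must argue that an $L^2(P)$ and $C^{k+2}$ solution of $\mathcal{L}_Pf=h$ with $h$ bounded actually lies in $\widehat{\mathcal{H}}_P$, so that $P(h)=0$ can be read off; this follows from uniqueness up to constants of $L^2(P)$-solutions (Theorem~\ref{Sur}/$\ker\widehat{\mathcal{L}}_P=\mathbb{R}$) matched against the solution produced by Proposition~\ref{RglPDE}. Everything else is routine.
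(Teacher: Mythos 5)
Your overall skeleton matches the paper's: reduce to solving the Stein equation $\mathcal{L}_P f_h=h$ for mean-zero $h\in C^\infty_b(M)$ via Proposition \ref{RglPDE}, verify the regular solution lands in the prescribed class, and upgrade from $C^\infty_b(M)$ to $C_b(M)$ by uniform density. Case (C1) is handled identically. In case (C2) you diverge from the paper in how you establish $P(\mathcal{L}_P f)=0$ for an \emph{arbitrary} member of the class: the paper invokes Gaffney's special Stokes theorem on complete manifolds, whereas you route through essential self-adjointness (Theorem \ref{ESAcpl}), noting that any $f\in C^{k+2}(M)\cap L^2(P)$ with $\mathcal{L}_P f\in C_b(M)\subset L^2(P)$ lies in $D(\mathcal{L}_P^*)=\widehat{\mathcal{H}}_P$ and pairing against $1\in\widehat{\mathcal{H}}_P$. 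That is a legitimate alternative; it implicitly assumes the initial domain is (or contains) $C^\infty_c(M)$ so that the Friedrichs extension coincides with the closure from Theorem \ref{ESAcpl}, but the hypotheses of Proposition \ref{RglPDE} already supply the needed regularity of $e^{-\phi}$.

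The genuine gap is in case (C3). You assert that ``the natural domain of the Friedrichs (Neumann) extension consists of functions satisfying the Neumann condition,'' but this is precisely the nontrivial step: Proposition \ref{RglPDE} only delivers $f_h\in\widehat{\mathcal{H}}_P\cap C^{k+2}(M)$, where $\widehat{\mathcal{H}}_P$ is defined weakly through the $H^1(P)$-closure, and you must show that this particular $C^{k+2}$ representative satisfies $g(\vec n,\nabla f_h)=0$ \emph{pointwise} on $\partial M$, or it does not belong to $\mathcal{D}^k(M)$ as defined. The paper closes this by integrating by parts: the Friedrichs identity $P(\mathcal{L}_P f_h,h)=-P(\nabla f_h,\nabla h)$ combined with the divergence theorem forces $\int_{\partial M}g(\vec n,\nabla f_h)\,h\,e^{-\phi}\,d\partial v=0$ for every test function $h$ with vanishing normal derivative, and then a separately proved boundary extension lemma (every smooth function on $\partial M$ extends to a function in $\mathcal{D}^\infty(M)$) shows the boundary trace class is rich enough to conclude $g(\vec n,\nabla f_h)=0$. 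Without that extension lemma, or some equivalent trace argument, your verification that $f_h\in\mathcal{D}^k(M)$ in (C3) is incomplete. The rest of your outline, including the treatment of the reverse implication and the integrability caveats, is sound.
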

\begin{proof}
Please see Appendix \ref{PrfSCC}.
\end{proof}

\subsection{Applications}\label{applications}
We are now ready to present several (non-numeric) examples that illustrate the application of the main results presented in the previous section.

\begin{example}[Cut Locus and Incomplete Manifolds] \label{Cut Locus}
Suppose $M$ is a compact manifold without boundary. Consider the distribution 
$$ \frac{d P}{d v}\propto \exp(-\Log_\mu(x)^T\Gamma_\mu \Log_\mu(x))$$
for some  $\mu\in M$ and positive definite tensor in $\Gamma_\mu\in T^{0,2}_\mu M$. Such a density is smooth in $\mathscr{N}_\mu:=M\setminus\mathscr{C}_\mu$ but not even continuous on the cut locus $\mathscr{C}_\mu$. Therefore, we adopt $\mathscr{N}_\mu$ as the support of $P$, which is an incomplete manifold. Note that $P\asymp v$ on $M$, thus WPI holds on $C^\infty_c(\mathscr{N}_\mu)\oplus \mathbb{R}$. Therefore, (C1) in theorem \ref{SCC} applies to this case. For any sequence of $M$-valued r.v. $X_n$ with $\prob\{X_n\in \mathscr{C}\}=0$,
$$ Q_{X_n}\xrightarrow{w.} P \iff E[\mathcal{L}_P f(X_n)]\to 0,\ \forall f\in \mathcal{D}^\infty(\mathscr{N}_\mu;P). $$
\end{example}

\begin{example}[Truncated Distributions]
\label{Truncation}
In many applications, to secure the uniqueness of the Fréchet mean of an $M$-valued r.v.,  it is necessary to truncate the distribution to lie within a convexity ball $B(\mu,r)$ for some $\mu\in M$ and $r>0$, that is, a ball on which $d(\cdot,\mu)$ is geodesically convex and between any two points within this region, there is a unique geodesic. Note that a closed and bounded subset on a complete manifold is compact by Hopf-Rinow theorem \cite[Theorem 5.7.1]{petersen2016riemannian}, so it is very common to assume that $B(\mu,r)$ is compact and has smooth boundary. In such cases, (C3) in theorem \ref{SCC} applies.
\end{example}

\begin{example}[Kernel Stein Discrepancy (KSD)]\label{KSD} In this example, we highlight a very special case with significant practicality, where the initial function class $\mathcal{H}_k$ is a \emph{reproducing kernel Hilbert space}(RKHS) reproduced by a smooth kernel function $k:M\times M\to\mathbb{R}$. For readers not familiar with such notions, we refer to \cite{berlinet2011reproducing}.

Let $k_P:=\mathcal{L}'_P\mathcal{L}_P k(x,x')$ represent the function obtained by letting $\mathcal{L}_P$ act on $x$ and $x'$ in order. A classical argument will show that 
$$ \sup \big\{ E[\mathcal{L}_P f(X) ]: f\in\mathcal{H}_k, \Vert f\Vert_{\mathcal{H}_k}\leq 1 \big\}= E k_P(X,X'), $$
where $X$ and $X'$ are $Q$-distributed and mutually independent $M$-valued random variables.
This value of the above equation is denoted by $\ksd(Q,P)$,  called \emph{kernel stein discrepancy}(KSD). Since the right hand side of above equation only involves one integral, the $\ksd$ is very easy to compute in applications and even easier when $Q$ is discrete. If $(\mathcal{L}_P,\mathcal{H}_k)$ is a Stein pair, we can similarly define the notion that a $\ksd$ characterizes $P$ if $\ksd(Q,P)=0\iff Q=P $, characterizes the weak convergence if $\ksd(Q_n,P)\to 0\iff Q_n\xrightarrow{w.} P$. In general, both of them are not true, but authors in \cite{barp2018riemann} discovered a special case where the $\ksd$ characterizes the weak convergence. Our framework provides a different path to establish this result as elaborated upon below.

Suppose $M$ is compact without boundary and $e^{-\phi}\in C^L(M)$. It is known that $H^s(M)$ for $s>\frac{n}{2}$ is a RKHS reproduced by some kernel function $k_s$ \cite{de2021reproducing}. We consider some $s>\frac{n}{2}+2$, so, by the Sobolev embedding theorem \cite[Proposition 3.3]{taylor2000PDEI}, $H^s(M)\hookrightarrow C^2_b(M)$. Recall that $C^\infty(M)\subset H^s(M)$. We now apply the theory in \S \ref{ESA} to conclude that for a sequence of distribution $Q_n$ on $M$ such that $Q_n\ll P$ and $\sup \Vert\frac{d Q_n}{d P}\Vert_{L^2(P)}<+\infty$,
$$ \ksd(Q_n,P)\to 0 \iff Q_n\xrightarrow{w.} P. $$
Furthermore, if $e^{-\phi}\in C^{s+2}(M)$, which is the case considered in \cite{barp2018riemann}, then we apply the theory in \S \ref{WPI}  so that the condition $Q_n\ll P$ and $\sup\Vert \frac{d Q_{X_n}}{d P} \Vert_{L^2(P)}<+\infty$ can be dropped.
\end{example}

\begin{example}[Hyperbolic space]\label{Hyperbolic} The $n$-dimensional hyperbolic space $\mathbb{H}^n$ is a well-known complete Riemannian manifold with constant sectional curvature $-1$. Consider the density function $\frac{d P}{d v}\propto \exp\left(-\frac{ d(x,\mu)^2}{\sigma^2}\right)$ for some $\mu\in\mathbb{H}^n$ and $\sigma>0$,  where $d$ is the Riemannian distance on $\mathbb{H}^n$. There is no cut locus on $\mathbb{H}^n$, hence $d^2(\cdot,\mu)$ is smooth globally. For detailed introduction on the structure of such space, we refer to readers to \cite{pennec2018barycentric}. 

Now we show that the WPI holds when $\sigma<(n-1)^{-\frac{1}{2}} $. By (C2) in proposition \ref{WPIcase}, it suffices to show that $\ric+\sigma^{-2}\hess(d(\cdot,\mu)^2)\geq \kappa g$ for some $\kappa>0$. The Ricci curvature and Hessian for $\mathbb{H}^n$ are known to be,  $\ric= -(n-1) g$ \cite[lemma 8.10]{lee2006riemannian} and $ \hess( d(\cdot,\mu)^2 )\geq g$ \cite[\S 2.3]{pennec2018barycentric} respectively.  Setting $\kappa= \sigma^2-n+1$ when $\sigma<(n-1)^{-\frac{1}{2}} $ makes 
(C2) in theorem \ref{SCC} applicable to this case.
\end{example}

\section{Conclusion}\label{Conc}

In this paper, we presented a novel framework for improving the characterization scope of the Stein pair on general Riemannian manifolds.  This improvement was achieved using the Friedrichs extension
applied to self-adjoint unbounded operators. The key feature of this framework is that it allows for analyzing the characterization scope of the distributions on Riemannian manifolds that can be compared. The stronger the imposed regularity conditions on the manifold and the target distributions, the stronger is the characterization scope of the resulting Stein pair. We presented several examples illustrating the application of our theory to a variety of unconventional situations including  intrinsically defined non-smooth distributions, truncated distributions on Riemannian manifolds and distributions on incomplete Riemannian manifolds.
Our future work will focus on developing applications of the theory presented here to a variety of manifold-valued data analysis problems encountered in imaging sciences.

\paragraph{Acknowledgement.} 
This research was in part funded by the NSF grant IIS 1724174 and the NIH NINDS and NIA grant R01NS121099 to Vemuri.

\begin{appendix} \label{Appendix}
\section{Proofs of Theorems}

In this appendix, we provide the proofs of all the theorems that are original to our work and were presented in \S\ref{MainResults}. For the sake of convenience, we have hyperlinked the section titles to the theorems statements in text.

\subsection{Proof of Theorem \ref{Kerconst}}\label{PrfKerconst}
\begin{proof}
It suffices show that for every function $f\in\widehat{\mathcal{H}}_P$ if $\widehat{\mathcal{L}}_P f=0$, then $f$ is a constant function. For such an $f$, note that $P(\nabla f,\nabla f)=-P(\widehat{\mathcal{L}}_P f, f)=0$ by  property \ref{FriPro}, hence, $\nabla f=0$ in $\widehat{\mathcal{H}}_P$. Recall that by the procedure of Friedrichs extension, there exists a sequence of $h_n\in\mathcal{H}_P\subset H^2(P)$ such that
$$ h_n\to f\ \text{in}\ L^2(P),\quad \int_M|\nabla h_n|^2 d P=  P(\nabla h_n,\nabla h_n)\to 0.  $$
Since $\mathcal{C}^{2,2}$ is dense in $H^2(P)$, without loss of generality, we may further assume that each $h_n$ is $C^\infty$. For each precompact open neighborhood $U$ with smooth boundary, $e^\phi$ is integrable on $U$.  Let $(h_n)_U=[v(U)]^{-1}\int_U h_n d v$ be the average of $h_n$ over $U$. By the \nameref{PI} with $p=1$ and the Hölder inequality \cite[equation 19.3]{billingsley2008probability}, we have, 
\begin{eqnarray*}
\int_U | h_n-(h_n)_U|d v
&\leq& \int_U |\nabla h_n| d v=\int_U e^{\frac{\phi}{2}}|\nabla h_n| e^{-\frac{\phi}{2}}d v\\
&\leq& \left[\int_U e^\phi d v\right]^{\frac{1}{2}} \cdot\left[\int_U |\nabla h_n|^2 e^{-\phi}d v\right]^{\frac{1}{2}}.\\
&\leq & \left[ C_\phi \int_U e^\phi d v\right]^{\frac{1}{2}} \cdot \sqrt{P(\nabla h_n,\nabla h_n)},
\end{eqnarray*}
where $C_\phi$ is the normalizing constant of $e^{-\phi}$.
Note that $P(\nabla h_n,\nabla h_n)\to 0$ as $n\to\infty$, which implies that $h_n$ converges to some constant in $L^1(U)$. Therefore, $f$ is constant a.e. on $U$. Since the choice of $U$ is arbitrary, we have that $f$ is locally constant everywhere. Therefore, $f$ is  globally constant since $M$ is connected. This completes the proof.
\end{proof}

\subsection{Proof of Theorem \ref{CC}}\label{PrfCC}

\begin{proof}

(C1) Since $Q_X(M)=P(M)=1$, the integration under $P$ and $Q$ agree on $\im\widehat{\mathcal{L}}_P\oplus\mathbb{R}$, which is a dense subspace of $L^2(P)$. Let $\rho=\frac{d Q_X}{d P}$, then $P(f)=Q(f)=P(f\rho)$, for $f\in \im\widehat{\mathcal{L}}_P\oplus\mathbb{R} $. Since $\rho\in L^2(P)$, $\rho-1\in L^2(P)$, thus $P(f,\rho-1)=0$ for $f\in \im\widehat{\mathcal{L}}_P\oplus\mathbb{R} $, which implies $\rho=1$.

\medskip
(C2) Suppose $\rho_n:=\frac{d Q_{X_n}}{d P}$ satisfy $\sup \Vert\rho_n\Vert_{L^2(P)}<+\infty$. Note that $ E[\widehat{\mathcal{L}}_P f(X_n)]\to 0$ for all $f\in\widehat{\mathcal{H}}_P$ is equivalent to that $\langle h,\rho_n\rangle_{L^2(P)}\to 0$ for all $h\in \im\widehat{\mathcal{L}}_P\oplus\mathbb{R}$. This is further equivalent to  $\langle h,\rho_n\rangle_{L^2(P)}\to 0$ for all $h\in L^2(P)$, since $\im\widehat{\mathcal{L}}_P\oplus\mathbb{R}$ is dense in $L^2_0(P)$ and $\sup \Vert\rho_n\Vert_{L^2(P)}<+\infty$. This is exactly the weak convergence in $L^2(P)$.

\medskip
(C3) Suppose $\im\widehat{\mathcal{L}}_P=L^2_0(P)$. Let 
$$ A_{m,n}:=\left\{h\in L^2_0(P): E[|h(X_k)|]\leq 1/m,\ \forall k\geq n   \right\}. $$
Clearly, $\bigcup A_{m,n}=L^2_0(P)$. By Fatou's lemma \cite[Theorem 16.3]{billingsley2008probability}, each $A_{m,n}$ is closed. By Baire's category theorem \cite[Theorem III.8]{reed1972methods}, there exists a closed ball $\overline{B}_r(h_0)\subset A_{m_0,n_0}$ for some $(m_0,n_0)$. For each $h\in L^2_0(P)$, note that $h_0,h_0+r h/\Vert h\Vert \in \overline{B}_r(h_0)\subset A_{m_0,n_0}$, thus 
$$ E|h(X_k)|\leq r^{-1}\Vert h\Vert\cdot [E|h_0(X_k)|+E|(h_0+r h/\Vert h\Vert)(X)|]\leq 2 (r m_0)^{-1}\Vert h\Vert,$$
for $k\geq n_0$. Therefore, $\rho_k$ satisfy the condition in (2) for a large enough $k$.
\end{proof}

\subsection{Proof of Theorem \ref{ESAcpl}}\label{PrfESAcpl}

\begin{proof}
For this proof, we use a classical proof technique introduced in $\cite[\S X.1]{reed1975ii}$. This proof technique was also used for the unweighted Laplacian $\Delta$  in \cite{strichartz1983analysis}. In this proof, we are faced with the weighted Laplacian case.

Since $\mathcal{L}_P$ is a symmetric operator semi-bounded above by $0$, it suffices to show that $\dim(\ker(I-\mathcal{L}_P^*))=0$. Suppose $u\in\ker(\lambda I-\mathcal{L}_P^*)$, i.e. , $ P(h- \mathcal{L}_P h,u)=0 $ for all $h\in C^\infty_c(M)$. We will now show that $u=0$.

In each local coordinate chart, $u$ is a very weak solution of an elliptic equation with Lipschitz coefficients. Thus, an application of the regularity results stated in \cite{zhang2012regularity} leads to the conclusion that $u\in H^2_{\loc}(P)$. For $h\in C^\infty_c(M)$, applying the \nameref{DivThm} leads to
$$ P(h,u)=P(\mathcal{L}_P h, u)= -P(\nabla h,\nabla u), $$
which actually implies that $P(\psi,u)=-P(\nabla \psi,\nabla u)$ for all $\psi\in H^1_c(P)$. 

Let $\phi\in C^\infty(\mathbb{R})$ be a function such that $\phi|_{[-1,1]}=1$, $\supp(\phi)= [-2,2]$ and $|\phi'|\leq 2$. Consider a point $o\in M$, we define $\phi_n$ on $M$ as $\phi_n(x)=\phi\left(n^{-1}d(x,o)\right)$. Then $\phi_n\in C_c(M)$ satisfy $\Vert\nabla\phi_n\Vert_\infty\leq 2/n$ and $\phi_n\to 1$.

Since $u\in H^2_{\loc}(P)$ and $\phi_n\in C_c(M)$ is Lipschitz continuous, $\phi_n^2 u\in H^1(M)$. Therefore, we have
$$ 0\leq P(\phi_n^2 u, u) = -P(\nabla(\phi_n^2 u),\nabla u)=-P(\phi_n^2\nabla u,\nabla u)-P(2 \phi_n u \nabla\phi_n,\nabla u ), $$
which further implies that
$$\Vert \phi_n\nabla u\Vert^2_{L^2(P)}\leq \Vert\nabla\phi_n\Vert_{\infty}\cdot\Vert u\Vert_{L^2(P)}\cdot\Vert\phi_n\nabla u\Vert_{L^2(P)}. $$
Therefore, $\Vert \phi_n\nabla u\Vert_{L^2(P)}\leq 2 n^{-1}\Vert u\Vert_{L^2(P)}\to 0$. Hence, $\nabla u=0$ and
 $P(\phi_n^2 u,u)=P(\nabla (\phi_n^2 u),\nabla u)=0 $
for all $n$. Therefore, $u=0$. This completes the proof.
\end{proof}

\subsection{Proof of Theorem \ref{Sur}}\label{PrfSur}

\begin{proof}
 We will first show that $\mathbf{1\Rightarrow 2}$. Let $\widehat{\mathcal{H}}_{P,0}:=\widehat{\mathcal{H}}_P\big/\mathbb{R}^n$ denote the centered subspace of $\widehat{\mathcal{H}}_P$, that is, all functions $f$ in $\widehat{\mathcal{H}}_P$ with $P(f)=0$. Consider the graph $\mathcal{G}$ of $\widehat{\mathcal{L}}_P$ on $\widehat{\mathcal{H}}_{P,0}$, i.e.,
$$ \mathcal{G}:=\left\{ (f,\widehat{\mathcal{L}}_P f)\in L^2(P)\times L^2(P):f\in \widehat{\mathcal{H}}_{P,0} \right\}, $$
which inherits the inner product on $L^2(P)\times L^2(P)$. $\mathcal{G}$ is closed since $\widehat{\mathcal{L}}_P$ is closed, and thus is a Hilbert space. Since $\im\widehat{\mathcal{L}}_P=L^2(P)$ and $\ker\widehat{\mathcal{L}}_P$ contains only constant functions, the projection $\pi:(f,\widehat{\mathcal{L}}_P f)\mapsto \widehat{\mathcal{L}}_Pf$ is a bounded linear bijection on $\mathcal{G}$. By inverse mapping theorem \cite[Theorem III.11]{reed1972methods}, the inverse of $\pi$ is bounded, from which we conclude $\mathbf{2}$. This proves the forward implication.

\medskip

We now proceed with the converse i.e.,  $\mathbf{2\Rightarrow 1}$. $\im\widehat{\mathcal{L}}_P$ is dense in $L^2(P)$, hence for each $\psi\in L^2(P)$ there exists a sequence $h_n\in \widehat{\mathcal{H}}_P$ such that $\widehat{\mathcal{L}}_P h_n\to\psi $ in $L^2(P)$ and $P(h_n)=0$. Statement $\mathbf{2}$ implies $h_n$ is Cauchy. As a result, $h_n$ is also Cauchy in $\mathscr{H}_P$ by property \ref{FriPro} and converges to some $h\in \mathscr{H}_P$. Note that
$$ P[\eta\cdot (h+\psi)]=\lim_{n\to\infty} P[\eta\cdot(h_n+\widehat{\mathcal{L}}_P h_n)]\leq \sup_{n\geq 1}\Vert h_n+\widehat{\mathcal{L}}_P h_n \Vert_{L^2(P)}\cdot \Vert \eta\Vert_{L^2(P)}, $$
for all $\eta\in\widehat{\mathcal{H}}_P$. Recall the definition \ref{DefofClass} and $\ref{DefofOpe}$ of $(\widehat{\mathcal{L}}_P,\widehat{\mathcal{H}}_P)$, we have that $h\in\widehat{\mathcal{H}}_P$ and $\widehat{\mathcal{L}}_P h=h+\psi-h=\psi$. This concludes the reverse implication.
\medskip

Next we prove that $\mathbf{3\Rightarrow 2}$. The statement that the inequality holds on $\mathcal{H}_P$ actually implies that it holds on the entire $\widehat{\mathcal{H}}_P$. For each $h\in\widehat{\mathcal{H}}_P$, application of Cauchy-Schwarz inequality leads to,
$$ \Vert h\Vert_{L^2(P)}^2\leq C^2 \Vert \nabla h\Vert^2_{L^2(P)}=C^2 P(\widehat{\mathcal{L}}_P h\cdot h)\leq C^2\Vert\widehat{\mathcal{L}}_P h \Vert_{L^2(P)}\cdot\Vert h\Vert_{L^2(P)}. $$ This completes the proof of $\mathbf{3\Rightarrow 2}$.
\medskip

We now show that $\mathbf{1,2\Rightarrow 3}$. We restrict $\widehat{\mathcal{L}}_P$ on $L_0^2(P)$. Then $\widehat{\mathcal{L}}_P$ is self-adjoint on $L^2_0(P)$ and  $D(\widehat{\mathcal{L}}_P)=\widehat{\mathcal{H}}_{P,0}$. The spectral theorem \cite[Theorem VIII.4]{reed1972methods} states that there exists a finite measure space $(\mathcal{S},\mu)$, a unitary operator $\mathcal{U}:L^2_0(P)\to L^2(\mathcal{S},\mu)$ and a real-valued function $f$ on $\mathcal{S}$ such that
\begin{itemize}
    \item $f$ is $\mu$-a.e. finite, that is, $|f|<+\infty$ $\mu$-a.e.
    \item $h\in \widehat{\mathcal{H}}_{P,0} $ if and only if $f\cdot\mathcal{U}h\in L^2(\mathcal{S},\mu)  $,
    \item $\mathcal{U}(\widehat{\mathcal{L}}_P h)=f\cdot\mathcal{U}h$ for $h\in\widehat{\mathcal{H}}_{P,0}$.
\end{itemize}
Let $\mathscr{F}:=\{\mathcal{U}h:h\in\widehat{\mathcal{H}}_{P,0}\}$, or equivalently, $ \{\psi\in L^2(\mathcal{S},\mu):f\cdot\psi\in L^2(\mathcal{S},\mu)\}$. Recall the properties of $\widehat{\mathcal{L}}_P$ and the statements $\mathbf{1}$ and $\mathbf{2}$, we have
\begin{enumerate}[(a)]
    \item $\mu(f\cdot \psi^2)\geq 0$ for $\psi\in \mathscr{F}$,
    \item $C\Vert f\psi \Vert_{L^2(\mathcal{S},\mu)}\geq \Vert\psi\Vert_{L^2(\mathcal{S},\mu)}$ for $\psi\in\mathscr{F}$,
    \item $\{f\psi:\psi\in\mathscr{F}\}=L^2(\mathcal{S},\mu)$.
\end{enumerate}

First we show $f<0$ is $\mu$-a.e. Suppose not, then, $\mu\{f\geq 0\}>0$. Since $\mu$ is finite, $I_{\{f\geq 0\}}\in L^2(\mathcal{S},\mu)$ and by property (c), there exists a $\psi_0\in\mathscr{F}$ such that, $f\psi_0=I_{\{f\geq 0\}}$ $\mu$-a.e. . Note that $f=0$ but $f\cdot\psi_0=1$ on $\{f=0\}$, thus $\mu\{f=0\}=0$. Similarly, we have $\psi_0>0$ $\mu$-a.e. on $\{f>0\}$. Therefore, 
$$ \mu(f\psi_0^2)=\mu(\psi_0 I_{\{f>0\}}+\psi_0\cdot I_{\{f=0\}} ) =\mu(\psi_0 I_{\{f>0\}})<0,$$ 
which  contradicts the property (a). Hence the result.
\medskip

Now we will show that $f\leq - C^{-1}$ $\mu$-a.e. Suppose not, then, $\mu\{f> -C^{-1}\}>0$, and again there exists $\psi_0\in\mathscr{F}$ with $f\psi_0=I_{\{f> - C^{-1}\}}$ $\mu$-a.e. Since $f\neq 0$ $\mu$-a.e. but $f\cdot\psi_0=0$ on $\{f\leq -C^{-1}\}$ , we have $\psi_0=0$ $\mu$-a.e. on $\{f\leq - C^{-1}\}$.  Since $-C^{-1}<f<0$ but $f\cdot \psi_0=1$ on $\{ f >-C^{-1}\}$, we have $\psi_0 < - C$ $\mu$-a.e. on $\{ f >-C^{-1}\}$. Therefore, $f\psi_0 = I_{\{f>-C^{-1}\}} < - C^{-1}\psi_0 $ $\mu$-a.e. and thus $ C \Vert f\psi_0\Vert_{L^2(\mathcal{S},\mu)}< \Vert \psi\Vert_{L^2(\mathcal{S},\mu)}$, which contradicts assumption $(b)$. Finally,
$$ \Vert\nabla h\Vert^2_{L^2(P)}=P(\widehat{\mathcal{L}}_P h\cdot h)=\mu(f\mathcal{U}h^2)\geq C^{-1}\mu(\mathcal{U}h^2)=C^{-1}\Vert h\Vert^2_{L^2(P)}. $$
\end{proof}

\subsection{Proof of Theorem \ref{SCC}}\label{PrfSCC}

\begin{proof}
The case (C1) follows directly from Prop \ref{RglPDE}. For the case (C2), it suffices to show that $P(\mathcal{L}_P f)=0$ for $f\in \mathcal{D}^k(M;P)$. This is from the special Stokes's theorem \cite{gaffney1954special} for differential forms on complete manifold without compact support. For (C3), we establish a lemma, that we need, before proceeding with the proof.

\begin{lemma}\label{boundary} For each smooth function $h_\partial$ on $\partial M$, there exists $h\in C^\infty(M)$ such that $h|_{\partial M}=h_\partial$ and $g(\vec{n},\nabla h)=0$ on $\partial M$, that is, $h\in\mathcal{D}^\infty(M;P)$.
\end{lemma}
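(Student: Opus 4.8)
The plan is to construct $h$ by extending $h_\partial$ so that it is constant along the geodesics normal to $\partial M$ inside a collar neighbourhood, and then to cut this extension off smoothly away from the boundary. We are in the setting of case (C3) of Theorem \ref{SCC}, so $M$ is compact with boundary and $\partial M$ is a compact embedded hypersurface. Let $\vec{n}$ be the outward unit normal field, so that $-\vec{n}$ points into $M$, and consider the normal exponential map $E(x,t):=\Exp_x\!\big(-t\,\vec{n}(x)\big)$. By the collar neighbourhood theorem, for some $\epsilon>0$ (uniform by compactness of $\partial M$) the map $E:\partial M\times[0,\epsilon)\to M$ is a diffeomorphism onto an open neighbourhood $U$ of $\partial M$; write $E^{-1}(p)=(\pi(p),\rho(p))$ for the two coordinate projections, and let $\partial_t:=dE(\partial/\partial t)$ be the coordinate vector field along the normal geodesics.

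Define $\tilde h\in C^\infty(U)$ by $\tilde h(p):=h_\partial(\pi(p))$, the constant-along-normal-geodesics extension of $h_\partial$. Since $\tilde h$ is constant along the integral curves of $\partial_t$, we have $d\tilde h(\partial_t)\equiv 0$ on $U$, i.e. $g(\nabla\tilde h,\partial_t)\equiv 0$. Now at a point $x\in\partial M$ the initial velocity of $t\mapsto E(x,t)$ is exactly $-\vec{n}(x)$, so $\partial_t|_{x}=-\vec{n}(x)$; hence
$$ g(\vec{n},\nabla\tilde h)(x) = -g(\partial_t,\nabla\tilde h)(x) = 0,\qquad x\in\partial M. $$
Moreover $\pi|_{\partial M}=\mathrm{id}$, so $\tilde h|_{\partial M}=h_\partial$. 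Thus $\tilde h$ already satisfies both requirements on $\partial M$; it only fails to be globally defined.

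To remedy this, pick a cutoff $\chi\in C^\infty_c(U)$ with $\chi\equiv 1$ on a smaller open neighbourhood $V$ of $\partial M$, and set $h:=\chi\,\tilde h$ on $U$ and $h:=0$ on $M\setminus\supp\chi$. The two formulas agree on the overlap $U\setminus\supp\chi$, so $h\in C^\infty(M)$. On $V$ we have $h=\tilde h$, whence $h|_{\partial M}=h_\partial$ and $g(\vec{n},\nabla h)=0$ on $\partial M$, i.e. $h\in\mathcal{D}^\infty(M;P)$. The only steps needing care are the existence of the normal-exponential collar (collar neighbourhood theorem, with compactness of $\partial M$ giving a uniform $\epsilon$) and the identification $\partial_t|_{\partial M}=-\vec n$, which is immediate from the definition of $E$; I do not anticipate any genuine obstacle here, so the argument above is essentially complete.
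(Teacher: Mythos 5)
Your proof is correct, but it takes a genuinely different route from the paper's. The paper works entirely in local boundary charts: in each chart $(U_k,\psi_k)$ it writes down an explicit extension that is affine in the last coordinate, $\bar f = f - \frac{x_n}{\Vert X_\perp\Vert}\frac{\partial f}{\partial X'}$, engineered so that the derivative in the pushed-forward normal direction vanishes on $\partial\mathbb{R}^n_+$, and then patches these local extensions with a partition of unity (the patching preserves the Neumann condition only because all the $h_k$ agree with $h_\partial$ on $\partial M$ and $\sum\phi_k\equiv 1$ near the boundary, so the $\nabla\phi_k$ terms cancel). You instead invoke the collar neighbourhood theorem via the normal exponential map and take the extension that is constant along normal geodesics, for which $g(\vec n,\nabla\tilde h)=0$ on $\partial M$ is automatic, then cut off. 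Your argument is global and coordinate-free, and it sidesteps the slightly delicate verification that the partition-of-unity gluing does not destroy the Neumann condition; the price is the appeal to the tubular/collar neighbourhood theorem, whereas the paper uses only charts and bump functions. One cosmetic caveat: the paper's convention is that $C^\infty_c(U)$ denotes compactly supported functions vanishing on $\partial M$, so your cutoff $\chi$ (which equals $1$ near $\partial M$) should not be written as an element of $C^\infty_c(U)$ in the paper's notation — say instead that $\chi$ is a smooth function on $M$ with $\supp\chi\Subset U$ and $\chi\equiv 1$ on a neighbourhood of $\partial M$. This does not affect the validity of the construction.
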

\begin{proof} For any boundary coordinate chart $(U,\psi)$ with $\psi(U)$ convex in $\mathbb{R}^n_+$, let $f:=h_{\partial}\circ\psi^{-1}$ and $X=\psi_*\vec{n}$ on $\psi(U)\cap\partial\mathbb{R}^n_+$. Decompose $X$ into $X_\perp + X'$, where $X'\in\partial\mathbb{R}^n_+$ and $X_\perp\perp \partial\mathbb{R}^n_+$. Note that $X_\perp\neq 0$, since $\vec{n}\perp \partial M$ on $M$. Now let
$$ \Bar{f}(x_1,\cdots,x_{n-1},x_n)=f(x_1,\cdots,x_{n-1},0)- \frac{x_n}{\Vert X_\perp\Vert}\cdot \frac{\partial f}{\partial X'}(x_1,\cdots,x_{n-1},0), $$
which satisfies $\frac{\partial \Bar{f}}{\partial X}=0$ on $\partial\mathbb{R}^n$. So $h=\Bar{f}\circ \psi $ is a local extension that is needed.

Take finitely many local charts $\{(U_k,\psi_k)\}_{k=1}^m$ such that $\partial M\subset\bigcup_{k=1}^m U_k$ and each $\psi_k(U_k)$ is convex. Next take a smooth partition of unity $\{\phi_k\}$ subordinate to $(U_k,\psi_k)$ with $\sum_{k=1}^n \phi_k \equiv 1$ on a open neighborhood of $\partial M$. 
For each chart $(U_k,\psi_k)$, there exists a local extension $h_k$, thus $\sum h_k\phi_k$ will be an admissible global extension.
\end{proof}
\medskip

Now we are ready to prove the theorem. Consider the case when $$ \mathcal{H}_P=\mathcal{D}^\infty(M)=\left\{f\in C^\infty(M): g(\vec{n},\nabla f)=0 \right\},$$
as defined as stated in the theorem. By the \nameref{DivThm} and proposition \ref{WPIcase}, $\mathcal{L}_P$ is clearly extendable on $\mathcal{D}^\infty(M)$ and the WPI holds on $\mathcal{D}^\infty(M)$. By proposition \ref{RglPDE}, for each $h\in C^\infty(M)$, there exists a $f\in C^{k+2}(M)\cap\widehat{\mathcal{D}}^\infty(M)$ such that $f$ solves $\mathcal{L}_P f=h$. By the property \ref{FriPro} of Friedrichs extension, $ P(\mathcal{L}_P f, h)=P(\nabla f,\nabla h) $ for $h\in \mathcal{D}^\infty(M;P)$. The \nameref{DivThm} shows that for all $h\in \mathcal{D}^\infty(M;P)$
$$\int_{\partial M} g(\vec{n}, \nabla f)\cdot h e^{-\phi} d\partial v  =  P(\mathcal{L}_P f,h)+P(\nabla f,\nabla h)=0. $$  This implies that $g(\vec{n},\nabla f)\cdot h=0$ for all $h\in\mathcal{D}^\infty(M)$ on $\partial M$. By lemma \ref{boundary}, all smooth functions on $\partial M$ have an extension in $\mathcal{D}^\infty(M)$, which actually implies that $g(\vec{n},\nabla f)=0$ on $\partial M$. Therefore, $f\in \mathcal{D}^k(M)$.
\end{proof}
\end{appendix}

\bibliography{references.bib}
\bibliographystyle{plain}

\end{document}